\newtheorem{theorem}{Theorem}[section]																
\newtheorem{lemma}[theorem]{Lemma}
\newtheorem{corollary}[theorem]{Corollary}
\def\eps{\varepsilon}
\def\a{\alpha}
\def\be{\beta}
\def\ga{\gamma}
\def\part{\partial}
\def\b1{\bold 1}
\newcommand{\beq}{\begin{equation}}
\newcommand{\eeq}{\end{equation}}
\theoremstyle{remark}
\numberwithin{equation}{section}
\date{\today}
\begin{document}

\title[Stable matchings]{On constrained matchings, stable under random preferences} 
\date{}
\author{Boris Pittel}
\address{Department of Mathematics, Ohio State University, 231 West 18-th Avenue, Columbus OH 43210-1175}
\email{pittel.1@osu.edu}
\begin{abstract} Colloquially, there are two groups, $n$ men and $n$ women, each man (woman) ranking women  (men) as potential marriage partners. A complete matching is called stable if no unmatched pair prefer each other to their partners in the matching. If some pairs are not admissible, then such a matching may not exist, but a properly defined  partial stable matching exists always, and all such matchings involve the same, equi-numerous, groups of men and women. Earlier we proved that, for the complete, random, preference lists, with high probability (whp) the total number of complete stable matchings is, roughly, of order $n^{1/2}$, at least. Here we consider the case that the preference lists are still complete, but a generic pair (man,woman) is admissible with probability $p$, independently of all other $n^2-1$ pairs. It is shown that the
expected number of complete stable matchings tends to $0$ if, roughly, $p<\tfrac{\log^2 n}{n}$ and to infinity if  $p>\tfrac{\log^2 n}{n}$. We show that whp: (a) there exists a complete stable matching if $p>(9/4)\tfrac{\log^2 n}{n}$, (b) the number of unmatched men and women is bounded if $p> \tfrac{\log^2n}{n}$,
and (c) this number grows as a fractional power of $n$ for $p<\tfrac{\log^2 n}{n}$.
\end{abstract}

\subjclass{60C05; 05C05, 92B10}

\maketitle
\section{Introduction and results} The classical model of a stable matching problem is sometimes formulated in colloquial terms of marriages. An instance of such a problem involves $n$ men and $n$ women, with every member ranking all, or some of
the members of opposite sex as potential marriage partners. A complete matching of two sets, if any exists, is called stable 
if no unmatched pair (man, woman) prefer each other to their partners under matching. In a classic paper \cite{Gal}, 
Gale and Shapley proved that when all ranking lists are complete, of length $n$ that is, at least one stable matching always exists. They proved this remarkable result by analyzing an equally remarkable algorithm:  men propose to women in simultaneous rounds, and each woman temporarily accepts the best proposal, by selecting her favorite man among the current proponents and her current suitor if she has one already, while each of the rejected men proposes, in the next round,  to a favorite woman among women whom he hasn't proposed to yet. The algorithm terminates once each woman holds a proposal;  the resulting matching is stable. And of course the roles can be reversed: women propose and men choose. It is well known (Gusfield and Irving \cite{Gus}) that the ``men propose/women choose'' algorithm delivers a men-optimal stable matching: at every other stable matching each wife is not higher on her husband's preference list than in the terminal matching for this algorithm. And likewise,  the ``women propose/men choose'' algorithm delivers a women-optimal stable matching.

A decade later, McVitie and Wilson \cite{Mac} suggested a sequential algorithm: men propose and women choose like in Gale-Shapley process, but the proposals are made in sequence, one proposal at a step. This fundamemtal algorithm determines the same, men-optimal, stable matching, as the Gale-Shapley algorithm does. Soon after Wilson \cite{Wil} proved that the average running time, i.e. the
total number of proposals by men is bounded by $nH_n$, $H_n=\sum_{j\in [n]}\tfrac{1}{j}$, if it is assumed  that all $2n$  preference lists are uniformly random, independent permutations of the opposite sex members. The ingenious proof was based on observation that the total number of proposals is {\it stochastically dominated\/} by the duration of a classic {\it coupon collecting\/} process: at each step a ball is put uniformly at random at one of $n$ distinguishable boxes, until each box is non-empty. Knuth \cite{Knu} undertook a systematic study of the stable matchings, showing in particular that the expected running time of the  McVitie-Wilson
algorithm is actually asymptotic to $nH_n\sim n\log n$. Consequently, the average rank of the wife  from the men-optimal matching is asymptotic to $\log n$. 

The book \cite{Knu} turned out to be highly influential, thanks to a list of thought-provoking, open problems. One of them was to estimate the expected value of $S_n$, the total number of stable matchings for the random instance of the matching problem. Using an inclusion-exclusion formula followed-up by an ingenious interpretation of each alternating term as a value of a multi-dimensional integral, Knuth found that 
\begin{equation}\label{1}
\Bbb E[S_n]=n! P_n,\quad P_n=\int\limits_{\bold x,\bold y\in [0,1]^n}\prod_{1\le i\neq j\le n}(1-x_iy_j)\,d\bold x\,d\bold y.
\end{equation}
The roots of this remarkable formula are clear: $n!$ is the total number of bijections between the set of men and the set of women, so $P_n$ has to be, and it is, the probability that a generic bijection is a stable matching under the independently uniform preference lists. 

We used the equation \eqref{1}  in \cite{Pit1} to prove that $E[S_n]\sim e^{-1}n\log n$, as $n\to\infty$.  Twenty years later
Lennon and the author \cite{Len} proved an analogous, more complex, formula for $\Bbb E[S_n^2]$, and it was used to show that
$\Bbb E[S_n^2]\sim(e^{-2}+(2e^3)^{-2})(n\log n)^2$, implying that $S_n$ exceeds $\Bbb E[S_n]$ with probability 
$>\bigl(1+(2e)^{-1}\bigr)^{-1}$.  We also proved \cite{Pit2} that whp $S_n$ is of order $\sqrt{n\log^{-1} n}$, at least.

There is an alternative proof of \eqref{1}, which we used for many other other extensions, see \cite{Pit1} and  \cite{Pit2}, 
for example.
 Let us assume that there are given two $n\times n$ matrices $X=\{X_{i,j}\}$ and $Y=\{Y_{i,j}\}$ with the rows and the columns corresponding to the men set $U=[n]$ and the women set $V=[n]$, respectively, whose entries are all independent, each uniformly distributed on
$[0,1]$. For each man $i$ (woman $j$) we define a permutation, i.e. ordering $\pi_i$ (respectively $\omega_j$) of $V$ ($U$ respectively) obtained by listing the women (men) in the increasing order of the entries in the $i$-th row of $X$
(in the $j$-th column of $Y$, respectively). Thus, the $2n$ preference lists are these permutations; by the definition of $X$ and $Y$, they are uniformly random, and mutually independent. 

Consider the diagonal matching formed by pairing man $i\in U$ and woman $i\in V$. Conditioned on $X_{k,k}=x_k$ and $Y_{k,k}=y_k$, $(k\in [n])$, the probability that this matching is stable is given by
\[
P_n(\bold x,\bold y)=\prod_{1\le i\neq j\le n}(1-x_iy_j).
\]
Indeed, for $i\neq j$,  $1-x_iy_j$ is the conditional probability that the unmatched pair $(i,j)$ of man $i$ and woman $j$ do not 
prefer each other to their partners $i\in V$ and $j\in U$ under the diagonal matching, and these $n^2$ events are all conditionally independent.  And \eqref{1} follows by Fubini's theorem. Here is a useful extension of \eqref{1}, \cite{Pit1}.
Let $P_{n,k}$ denote the probability that the diagonal matching is stable {\it and\/} the total rank of wives $i\in V$ (husbands $i\in U$) as ranked by their respective husbands $i\in U$ (wives $i\in V$) is $k$; so $k\in [n,n^2]$. Then 
\begin{equation}\label{2}
P_{n,k}=\int\limits_{\bold x,\bold y\in [0,1]^n}[z^{k-n}]\prod_{1\le i\neq j\le n}\bigl(1-x_i(1-z+zy_j)\bigr)\,d\bold x d\bold y;
\end{equation}
the integrand is the coefficient by $z^{k-n}$ in the product. This formula implies \eqref{1} since the sum of the integrands over $k$ is the value of the last product at $z=1$.

When some of the preference lists are incomplete, so that some pairs $(\text{man},\text{woman})$ are unacceptable, a complete stable matching may not exist. Gusfield and Irving \cite{Gus} 
define a possibly incomplete matching as unstable ``if there is a man $m$ and a woman $w$ such that  {\bf (i)\/} $m$ and $w$ are not partners in $M$, but each is acceptable to the other; {\bf (ii)\/} $m$ is either unmatched in $M$, or prefers $w$ to his partner in $M$; {\bf (iii)\/} $w$ is either unmatched in $M$, or prefers $m$ to her partner in $M$''. They proved 
that in this case ``the men and the women are each partitioned into two sets---those that have partners in all stable matchings and those that have partners in none''.

Recently Stoyan Dimitrov and Adam Hammett \cite{Dim} posed a problem of stable matchings with incomplete random preference lists. Supposing that that the preference list of each man (woman) consists of $k$ women ($k$ men), and otherwise the lists are 
uniformly random and independent, what is the probability that a complete stable matching exists? 
What is the threshold value of $k(n)$ for likely existence of a complete stable matching? Stoyan and Adam's question 
jolted the author's memory: it was proved in \cite{Pit2} that in the case of unrestricted preference lists the maximum rank 
of a wife,  when men propose and  women select, is likely to be of order $\log^2 n$. 
However, contrary to a casual remark in \cite{Pit2}, it does not necessarily mean that the critical $k(n)$ is of order $\log^2 n$.

Unlike the above model, in this paper we continue to stay with the case of complete preference lists, though this time the prospective pairs must each pass an ``admissibility'' test. (We leave it to the reader to come up with their own colloquial interpretation of such a test. Romeo and Juliet story, as a poetic inspiration?) For a chance of sharp analysis, we need to preserve homogeneity of the model and mutual independence of admissibility tests for individual pairs. To this end, we make the simplest assumption, namely that the results of the tests for various pairs form a sequence of independent trials with success (admissibility) probability equal to a given $p\in (0,1)$. What is the threshold value $p=p(n)$ for likely existence of a complete stable matching? 

The main result in this paper is that $p(n)$ is exactly of order $\tfrac{\log^2 n}{n}$. For the critical $p(n)$, the likely length of a shortest preference list is of order $\tfrac{\log^2 n}{\log\log n}$, at least, which makes it plausible that the critical $k(n)$ for Stoyan and Adam's question is also of order $\tfrac{\log^2 n}{\log\log n}$, at least.

To put this research into perspective, a classic
result of Erd\H os and R\'enyi \cite{Erd} is that the critical value $p(n)$ for likely existence of a perfect matching in the random bipartite graph on $[n]\times [n]$ with independent edge probabilities $p$ is $\tfrac{\log n}{n}$.

\subsection{Integral formulas for the $p$-extension of the matching problem.} Let us see why the posed problem holds a promise. Begin with $\Bbb E[S_n]$, where  $S_n=S_n(p)$ stands for the total number of {\it complete\/} stable matchings. We use again the two random matrices $X$ and $Y$ to generate the uniformly random, independent preference lists. The diagonal matching is stable if {\bf (a)\/} all $n$ pairs $(i,i)$ pass the admissibility test;  {\bf (b)\/}
for each $i\neq j$ it is not true that man $i$ and woman $j$ prefer each other to their partners, woman $i$ and man $j$ respectively, {\it and\/} the unmatched pair $(i,j)$ passes the admissibility test. Conditioned on $X_{k,k}=x_k$ and $Y_{k,k}=y_k$, $(k\in [n])$, the $(n)_2$ events described above are independent, so the conditional probability of stability
is 
\begin{equation*}
P_n(x,y)=P_{n,p}(x,y):=p^n\prod_{1\le i\neq j\le n}(1-px_iy_j),
\end{equation*}
whence
\begin{equation}\label{3}
\Bbb E[S_n]=n! P_n,\quad P_n=p^n\!\!\!\!\!\!\!\!\int\limits_{\bold x,\bold y\in [0,1]^n}\prod_{1\le i\neq j\le n}(1-px_iy_j)\,d\bold x\,d\bold y.
\end{equation}
(Does the reader see that $P_n$ increases with $p$, as intuitively it should?) Turn to the $p$-counterpart of the equation \eqref{2}. 
First of all, the total rank of wives in the diagonal matching $M$, call it $Q_n$, is given by 
\[
Q_n=n+\sum_{i\in [n]}\big|\{j:\, (i,j) \text{ is admissible}, X_{i,j}<X_{i,i}\bigr\}\big|.
\]
The $i$-th term is the total number of admissible women for man $i$, whom he prefers to his partner, woman $i$.
The task is to evaluate 
$P_{n,k}(\bold x,\bold y)$ the conditional probability of the event $\{M\text{ is stable}\}\cap\{Q_n=k\}$. Using $\Bbb I(A)$ to denote the event $A$'s indicator, and $\circ$ to denote conditioning on $\{X_{i,i}=x_i, Y_{i,i}=y_i\}_{i\in [n]}$, we have
\begin{multline*}
P_{n,k}(\bold x,\bold y)\\
=\Bbb E\biggl[\Bbb I(M\text{is stable})\cdot\Bbb I\Bigl(\sum_{i\in [n]}\big|\bigl\{j: (i,j)\text{ admissible}, X_{i,j}<x_i\bigr\}\big|=k-n\Bigr)\Big|\circ\biggr]\\
=[\xi^{k-n}]\,\Bbb E\bigl[\Bbb I(M\text{is stable})\xi^{Q_n-n}\big|\circ\bigr].\\
\end{multline*}
To evaluate this expression, we turn to the probabilistic view of a generating function. Fix $\xi\in (0,1)$; sift through the 
admissible pairs  $(i\neq j)$ and, whenever $X_{i,j}<x_i$, ``mark'' the pair $(i,j)$ with probability $\xi$, independently of all other unmatched, admissible pairs. Then the last expectation is the conditional probability that $M$ is stable and all the admissible pairs $(i,j)$ with $X_{i,j}<x_i$ are marked. Consequently
\[
\Bbb E\bigl[\Bbb I(M\text{is stable})\xi^{Q_n}\big|\circ\bigr]=p^n\, \Bbb P(B|\circ),\quad B:=\bigcap_{1\le i\neq j\le n}B_{i,j};
\]
here 
\begin{align*}
B_{i,j}&=(X_{i,j}>x_i)\bigcup\Bigl(\bigl(X_{i,j}<x_i; (i,j)\text{ non-admissible or }(i,j)\text{ admissible}\\
&\qquad\qquad\qquad\qquad\text{ and }Y_{i,j}>y_j\bigr)\cap \bigl((i,j)\text{ is marked}\bigr)\Bigr).
\end{align*}
The events $B_{i,j}$ are conditionally independent, and
\begin{multline*}
\Bbb P(B_{i,j}|\circ)= 1- x_i+ x_i\bigl[1-p +p(1-y_j)\xi\bigr]=1-px_i(1-\xi+\xi y_j).\\
\end{multline*}
Therefore
\[
P_{n,k}(\bold x,\bold y)=p^n [\xi^{k-n}]\prod_{1\le i\neq j\le n}\bigl(1-px_i(1-\xi+\xi y_j)\bigr),
\]
whence 
\begin{equation}\label{4}
\Bbb P\bigl(M\text{ stable}, Q_n=k\bigr)=p^n\!\!\!\!\!\!\!\int\limits_{\bold x, \bold y\in [0,1]^n}
\!\!\!\![\xi^{k-n}]\prod_{1\le i\neq j\le n}\!\!\!\!\bigl(1-px_i(1-z+zy_j)\bigr)\,d\bold x d\bold y.
\end{equation}
Of course, a similar equation holds for the total rank of husbands in $M$ among all admissible men.
 \subsection{The $p$-extension of the proposal algoritnm.} Let $\mathcal M$ and $\mathcal W$ denote the set of men and the set of women, $|\mathcal M|=|\mathcal W|=n$. Consider ``the men propose/the women choose" algorithm: at each step a currently non-aligned man $m$ proposes to the woman $w$, his {\it best\/} choice among $n$ women to whom he hasn't proposed yet,
and $m$ and $w$ become a couple with a given probability $p>0$,  
either if $w$ does not have a partner, or she does, but prefers $m$ to him. A rejected proponent will have to resume proposing. Once a woman holds a proposal, she may be  proposed to by other men, each temporarily accepted (admissible) man being her {\it best\/} choice among the previously accepted proponents. That is, the set of women holding proposals can only increase. As for men, each of them has kept moving {\it down\/} on his preference list. The process stops after at most $n^2$ proposals, when there is {\it no admissible\/} pair $(m,w)$ such that:

\noindent {\bf (i)\/} $m$ and $w$ are both matched separately, but prefer each other to their partners; (if that had been the case, $m$ would have earlier proposed to, and formed a couple with $w$, and $w$ would have ended with a different partner, higher on her preference list);

\noindent or {\bf (ii)\/} $m$ is unmatched and $w$ is either unmatched, or matched, but prefers $m$ to her partner; (if that 
had been the case, $w$ would have been matched at the end with a man who is at least as high on her list as $m$, thus different from $w$'s partner);

\noindent or {\bf (iii)\/}: $m$ is matched and $w$ is unmatched, but $m$ prefers $w$ to his partner; (if that had been the case, $m$ would have proposed to, and been accepted by $w$ earlier in the process, who would have ended with a partner).

So, the proposal algorithm delivers two  equinumerous sets, a set $T_1$ of men and a set $T_2$   
of women, each $w\in T_2$ holding a proposal from a man $m\in T_1$, while the remaining men and women are unmatched. 
The properties {\bf (i)\/}-{\bf (iii)\/} qualify $T_1, T_2$ and the matching between them as a ({\it possibly partial\/}) stable matching, and $T_1$ and $T_2$ are exactly the sets of men and the women that are matched in every stable matching, Gusfield and Iriving \cite{Gus}.

Let $T_1\subseteq \mathcal M$,  $T_2\subseteq \mathcal W$, $|T_1|=|T_2|=:\ell$. Let $M$ denote a matching between $T_1$ and $T_2$. Given $\bold x:=\{x_m\}_{m\in T_1}\in [0,1]^{T_1}$, $\bold y:=\{y_w\}_{w\in T_2}\in [0,1]^{T_2}$, we need to bound 
$\Bbb P(M|\bold x,\bold y)$, the probability that $M$ is the terminal matching in the proposal algorithm, {\it conditioned\/} 
on the event$\{X_{m,M(m)}=x_m,\,Y_{M(w),w}=y_w\}_{m\in T_1,w\in T_2}$. To upper-bound $\Bbb P(M|\bold x,\bold y)$, consider the constraints imposed on  all pairs $(m,w)$ for the event in question to hold.

{\bf (a)\/} If $m\in T_1,\,w\in T_2$ and $(m,w)\in M$, i.e. $w=M(m)$ ($m=M(w)$), then $(m,w)$ is admissible, an event of probability $p$.

{\bf (b)\/} If $m\in T_1$, $w\in T_2$ and $w\neq M(m)$, then either
$(m,w)$ is inadmissible (event of probability $1-p$), or $(m,w)$ is admissible (event of probability $p$). In the latter case,
either $m$ prefers $M(m)$ to $w$, or $m$ prefers $w$ to $M(m)$. 
The (conditional) probability of this event is
\[
1-p+p\bigl[\Bbb P(X_{m,w}> x_m)+\Bbb P(X_{m,w}<x_m)\cdot\Bbb P(Y_{m,w}>y_w)\bigr]=1-px_my_w.
\]
(The reader may prefer a sleeker argument: the complementary event for a pair $(m,w)$ in question is ``$(m,w)$ is admissible, and $m$ and $w$ prefer each other to their partners $M(m)$ and $M(w)$''.)

\noindent{\bf (c')\/} If $m\in T_1$, $w\notin T_2$, then either $(m,w)$ is inadmissible, or $(m,w)$ is admissible, but $m$ prefers $M(m)$ to $w$, which is the event of probability $1-p +p(1-x_m)=1-px_m$.

\noindent {\bf (c'')\/} If $m\notin T_1$, $w\in T_2$, then either $(m,w)$ is inadmissible, or $(m,w)$ is admissible, but $w$ prefers $M(w)$ to $m$, which is the event of probability $1-p+p(1-y_w)=1-py_w$,

\noindent {\bf (d)\/} If $m\notin T_1$, $w\notin T_2$, then $(m,w)$ is inadmissible, the event of probability $1-p$.

\noindent Since the events in items {\bf (a)-(d)\/} are (conditionally) independent, we obtain
\begin{multline}\label{6}
\Bbb P(M|\bold x,\bold y)\le 
p^{\ell}\prod_{m\in T_1,\, w\in T_2\atop w\neq M(m)}(1-px_my_w)\\
\times \biggl(\prod_{m\in T_1}(1-px_m)\cdot\prod_{ w\in T_2}(1-py_w)\biggr)^{n-\ell}\cdot (1-p)^{(n-\ell)^2}.
\end{multline}
Introduce $P_n(M)$, the probability that the proposal algorithm delivers a partial matching $M$. By \eqref{6}, we obtain
\begin{multline*}
P_n(M)=\int\limits_{\bold x,\bold y\in [0.1]^{\ell}}\Bbb P(M|\bold x,\bold y)\,d\bold x d\bold y
\le p^{\ell}(1-p)^{(n-\ell)^2}\\
\times\int\limits_{\bold x,\bold y\in [0,1]^{\ell}}\prod_{m\in T_1, w\in T_2\atop w\neq M(m)}(1-px_my_w)
\times \biggl(\prod_{m\in T_1}(1-px_m)\cdot\prod_{ w\in T_2}(1-py_w)\biggr)^{n-\ell}\,d\bold xd\bold y.
\end{multline*}
We emphasize that this is inequality. Let $P_n(\ell)$ be the probability that the proposal algorithm delivers a matching of $\ell$ men and $\ell$ women. By  the union bound, and homogeneity, the bound for $P_n(M)$ implies 
\begin{multline}\label{6.5}
P_n(\ell)\le p^{\ell}(1-p)^{(n-\ell)^2}\binom{n}{\ell}\binom{n}{\ell}\ell!\\
\times\int\limits_{\bold x,\bold y\in [0.1]^{\ell}}\prod_{i\in [\ell], j\in [\ell]\atop i\neq j}(1-px_iy_j)
\cdot \biggl(\prod_{i\in [\ell]}(1-px_i)\cdot\prod_{j\in [\ell]}(1-py_j)\biggr)^{n-\ell}\,d\bold xd\bold y.
\end{multline}
This bound does not reflect the intrinsic asymmetry of the proposal algorithm, and 
 the RHS of \eqref{6.5} is the expected number of the stable partial matchings involving $\ell$ men and $\ell$ women. 
Keeping only the first product in the above integrand, and using \eqref{3}, we obtain then
\begin{equation}\label{7}
P_n(\ell)\le (1-p)^{(n-\ell)^2}\binom{n}{\ell}^2 \Bbb E[S_{\ell}].
\end{equation}
Here $\Bbb E[S_{\ell}]$ is the expected number of the complete stable matchings between the men set $[\ell]$ and the women set $[\ell]$.

To proceed,  for $m\in T_1$ and $w\in \mathcal W\setminus T_2$ either $(m,w)$ is not admissible or $m$ prefers his partner in $T_2$. So the rank of $m$'s partner among all admissible women in $\mathcal W$ is the rank of $m$'s partner among admissible women in $T_2$. Likewise, for $w\in T_2$ the rank of $w$'s partner among all admissible men in $\mathcal M$ is the rank of $w$'s partner among admissible men in $T_1$. This observation leads to the following hybrid
of  \eqref{4} and \eqref{6.5}. Given $\ell$, let $M$ be the diagonal matching of two copies of $[\ell]$, and $Q$
Then, for $k\ge \ell$,
\begin{multline}\label{6.6}
\Bbb P(M\text{ stable}, Q=k)\\
=p^{\ell}(1-p)^{(n-\ell)^2}\!\!\!\!\!\!\int\limits_{\bold x, \bold y\in [0,1]^{\ell}}\!\!\!\bigl[\xi^{k-\ell}\bigr]
\prod_{1\le i\neq j\le \ell}\bigl(1-px_i(1-z+zy_j)\bigr)\\
\times \biggl(\prod_{i\in [\ell]}(1-px_i)\cdot\prod_{j\in [\ell]}(1-py_j)\biggr)^{n-\ell}\,d\bold x d\bold y.
\end{multline}

\subsection{Critical value of $p$.}
\begin{theorem}\label{thm1} {\bf (i)\/} 
If  $p\le \tfrac{\log^2 n}{n}\bigl(1-\a\tfrac{\log\log n}{\log n}\bigr)$, $\a>2$, then $\Bbb E[S_n]\to 0$. {\bf (ii)\/} If 
$p\ge  \tfrac{\log^2 n}{n}$,  then $\Bbb E[S_n]\to \infty$.
\end{theorem}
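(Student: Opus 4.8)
The plan is to analyze the integral
\[
P_n=p^n\!\!\int\limits_{\bold x,\bold y\in[0,1]^n}\prod_{1\le i\neq j\le n}(1-px_iy_j)\,d\bold x\,d\bold y
\]
asymptotically and then combine with $n!$ via Stirling. First I would take logarithms inside the integral: $\log\prod_{i\neq j}(1-px_iy_j)=\sum_{i\neq j}\log(1-px_iy_j)$, and since $p=O(\log^2n/n)$ is small while the main contribution to the integral comes from $x_i,y_j$ not too close to $1$, I expect $\log(1-px_iy_j)\approx -px_iy_j-\tfrac12 p^2x_i^2y_j^2-\cdots$. Summing the leading term gives $-p\bigl(\sum_i x_i\bigr)\bigl(\sum_j y_j\bigr)+p\sum_i x_i^2\cdot\text{(correction for }i=j)$, which after replacing $\sum x_i,\sum y_j$ by their typical values (concentrated near $n/2$) suggests the integrand is roughly $\exp(-pn^2/4)$ times lower-order factors. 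With $p\asymp\log^2n/n$ this is $\exp(-\Theta(n\log^2n))$, which would swamp $n!p^n\asymp\exp(n\log n+n\log p)$. So naively $\Bbb E[S_n]\to0$ always, which is wrong — meaning the dominant contribution does \emph{not} come from $\bold x,\bold y$ near the ``center'' but from the region where one of the vectors, say $\bold x$, is small. This is the standard phenomenon from \cite{Pit1}: the integral is dominated by $\bold x$ clustered near $0$ (the men-optimal matching has wives of small rank), so I would substitute $x_i=u_i/n$ or, more precisely, localize to $\max_i x_i=O(\text{poly}\log n/n)$.

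Concretely, I would follow the substitution used for the unconstrained case: write the integral over $\bold x$ by conditioning on $s=\sum_i x_i$, or better, use the known sharp asymptotics. For part (i), the goal is an \emph{upper} bound showing $\Bbb E[S_n]\to0$. Keeping $x_i\in[0,1]$ and bounding $1-px_iy_j\le e^{-px_iy_j}$, and then integrating out $\bold y$ first: $\int_0^1\prod_i e^{-px_iy}\,dy=\int_0^1 e^{-py\sum_i x_i}\,dy\le \tfrac{1}{p\sum_i x_i}$ (for each of the $n$ copies corresponding to $j$, but we must be careful that the product over $j\neq i$ only omits one factor). Doing this for all $n$ values of $j$ gives roughly $\bigl(p\sum_i x_i\bigr)^{-n}$ up to a bounded correction for the missing diagonal terms, so
\[
P_n\le p^n\cdot C^n\!\int_{[0,1]^n}\frac{d\bold x}{\bigl(p\sum_i x_i\bigr)^{n}}=C^n\!\int_{[0,1]^n}\frac{d\bold x}{\bigl(\sum_i x_i\bigr)^{n}}.
\]
The last integral, by scaling $\bold x=s\bold v$ on the simplex, is $\int_0^n s^{n-1}\cdot s^{-n}\,ds\cdot\text{Vol}=O(\log n)\cdot\text{(simplex volume factor)}$; multiplying by $n!$ and tracking constants should give $\Bbb E[S_n]=O(C^n\log n/n!)\cdot n!$ — here I need the bookkeeping to produce precisely the threshold $p<\tfrac{\log^2n}{n}(1-\alpha\tfrac{\log\log n}{\log n})$ with $\alpha>2$, which means the crude $e^{-px_iy_j}$ bound is too lossy and I must instead keep $\log(1-px_iy_j)$ exactly and estimate $\int_0^1(1-px_iy)^{n}\,dy\sim\tfrac{1}{(n+1)px_i}$ carefully, because the constant in front of $\log^2n/n$ is decided by the interplay between the $p^n$, the $\binom{n}{\cdot}$-type volume factor, and the $\log$-correction. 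The real work in (i) is this constant-chasing: showing $n!P_n\le \exp\bigl(n\log n - n\log(1/p) - n\log\log n + \text{l.o.t.}\bigr)\to0$ exactly when $p\le\tfrac{\log^2n}{n}(1-\alpha\tfrac{\log\log n}{\log n})$.

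For part (ii), I need a \emph{lower} bound on $P_n$. Here I would restrict the integral to a favorable subregion — say $x_i\in[0,a/n]$ for a suitable $a=a(n)$ growing like $\log^2n$, and $y_j\in[0,1]$ arbitrary — and lower-bound $\prod_{i\neq j}(1-px_iy_j)\ge \prod_{i\neq j}(1-pa/n)\ge (1-pa/n)^{n^2}\ge e^{-pan-o(pan)}$ when $pa/n=o(1/n)$... but $pa/n\asymp\log^4n/n^2$, so $(1-pa/n)^{n^2}\asymp e^{-\log^4n/1}$ which is tiny, too lossy. Instead I would integrate $\bold y$ out honestly: on $x_i\le a/n$, $\int_0^1\prod_{i:i\neq j}(1-px_iy)\,dy\ge \int_0^1 e^{-2p\sum_i x_iy}\,dy\ge\tfrac{c}{p\sum_ix_i}$ (using $1-t\ge e^{-2t}$ for small $t$), giving $P_n\ge p^n\cdot c^n\int_{x_i\le a/n}(p\sum_i x_i)^{-n}\,d\bold x= c^n\int_{x_i\le a/n}(\sum x_i)^{-n}d\bold x$. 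Rescaling $x_i=a u_i/n$ turns this into $c^n(a/n)^{n}\cdot (a/n)^{-n}\int_{u_i\le1}(\sum u_i)^{-n}d\bold u= c^n\int_{[0,1]^n}(\sum u_i)^{-n}d\bold u$, i.e. the same integral as before but now honestly from below, and $n!$ times this is $\exp(n\log n-n\log(1/p)-n\log\log n+\cdots)\to\infty$ when $p\ge\tfrac{\log^2n}{n}$. The main obstacle in both directions is that $e^{-t}$-type bounds lose exactly the $\log\log n$ in the exponent, so I must work with $\int_0^1(1-p x_i y)^{n\pm1}dy$ to within a $(1+o(1))$ factor, and assemble $n!\,p^n\,(\text{volume})\,(\text{log correction})$ with the constants tracked to subexponential precision — this is where the asymmetry in $\alpha>2$ versus the clean $p\ge\log^2n/n$ comes from, and it is the delicate part of the argument.
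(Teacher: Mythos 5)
Your high-level instinct — that the dominant contribution is not central, and that the $\log\log n$ in the exponent demands careful constant-chasing — is right, but the sketch has a scaling error and misses the key device the paper uses to actually win that $\log\log n$ factor.

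The scale is off. After integrating out $\bold y$, the integrand depends on $\bold x$ mainly through $s=\sum_i x_i$, and the dominant contribution in fact comes from $s$ of order $n/\log n$, not from $\max_i x_i = O(\mathrm{poly}\log n/n)$ (which would force $s$ polylogarithmic). Your lower-bound region $x_i\le a/n$ with $a\asymp\log^2 n$ gives $s\le a$ and $ps\le pa=O(\log^4 n/n)\to 0$, but then $\int_0^1 e^{-2psy}\,dy=\tfrac{1-e^{-2ps}}{2ps}\to 1$, so the bound $\ge c/(ps)$ you invoke is internally inconsistent — it would exceed $1$. To make the $1/(ps)$-type behaviour real you need $ps$ bounded below, i.e.\ $s\gtrsim n/\log^2 n$; the paper pins down $s\in\bigl[\tfrac{n}{\log(n\omega_1)},\tfrac{n}{\log(n\omega_2)}\bigr]$.

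The key missing ingredient is the change of variables $\bold x\mapsto (s, v_1,\dots,v_{n-1})$ with $v_j=x_j/s$, which converts the integral over $[0,1]^n$ into a one-dimensional integral over $s$ times a probability statement about uniform spacings $L_1,\dots,L_n$ (densities \eqref{A}–\eqref{C} and Lemma~\ref{lem2}). This is not cosmetic: the sharp threshold comes precisely from the tail behaviour of $L_n^+=\max_j L_j$. For part (i), the paper splits at $s(n)=\tfrac{\log n-\be\log\log n}{p}$: the piece $s\le s(n)$ is killed by a concavity argument for $\Phi_n(\eta)=n\log(1-e^{-\eta})-\log\eta$, yielding $\exp(-\log^{\be}n+\log n)\to 0$ provided $\be>1$; the piece $s\ge s(n)$ is killed because $1/s\le 1/s(n)$ forces $L_n^+$ to be atypically small, and Lemma~\ref{lem2} makes that probability super-polynomially small provided $\be<\a-1$. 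Choosing $\be\in(1,\a-1)$ requires $\a>2$ — that is where the constant comes from, not from tightening $1-t\approx e^{-t}$ to $(1+o(1))$ precision. Similarly for part (ii) the lower-bound region must also impose $\sum_j (x_j/s)^2 \le 3/n$ so that the quadratic correction $\exp(-p^2y^2t_j/2)$ stays $1+o(1)$; your sketch keeps only the linear term and does not control this. Without the spacings machinery and the $s$-splitting at $s(n)$, bounding $1-px_iy_j$ by $e^{-px_iy_j}$ or $e^{-2px_iy_j}$ loses exactly the factor that separates $\a>2$ from the false claim that $\E[S_n]\to 0$ for all $p$ of order $\log^2 n/n$.
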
 

The proof is based on the properties of a random partition of $[0,1]$. Let $X_1,\dots, X_{\ell}$ denote the independent random variables, each distributed uniformly on $[0,1]$. Set $\mathcal S_{\ell}=\sum_{j\in [\ell]}X_j$, $V_j=\tfrac{X_j}{\mathcal S_{\ell}},\, (j\in [\ell])$, so that $V_j\in [0,1]$, $\sum_{j\in [\ell]}V_j=1$. Introduce $f_{\ell}(s,v_1,\dots,v_{\ell-1})$, the density of 
$(\mathcal S_{\ell},V_1,\dots V_{\ell-1})$, which is non-zero for $v_j\le \tfrac{1}{s}$, $\sum_{j<\ell}v_j\le 1$. The Jacobian of $\bold x=(x_1,\dots, x_{\ell})$ with respect to $(s,v_1,\dots, v_{\ell-1})$ is $s^{\ell-1}$, implying that
\begin{equation}\label{A}
 f_{\ell}(s,v_1,\dots,v_{\ell-1})=s^{\ell-1}\,\Bbb I\Bigl(\sum_{j<\ell}v_j\le 1\text{ and } \max_{j\le \ell}v_j\le\tfrac{1}{s}\Bigr),
\end{equation}
where $v_{\ell}:=1-\sum_{j<\ell} v_j$. Now, $(\ell-1)!\cdot\Bbb I\Bigl(\sum_{j<\ell}v_j\le 1\Bigr)$ is the joint density of $L_1,\dots,L_{\ell-1}$,  the lengths of the first $(\ell-1)$ consecutive intervals obtained by selecting  $(\ell-1)$ points 
independently and uniformly at random in $[0,1]$. So, integrating \eqref{A} over $v_1,\dots, v_{\ell-1}$, we obtain: 
the density of $S_{\ell}$ is given by
\begin{equation}\label{B}
f_{\ell}(s)=\tfrac{s^{\ell-1}}{(\ell-1)!}\cdot \Bbb I \bigl(\max_{j\in [\ell]}L_j\le \tfrac{1}{s}\bigr), 
\end{equation}
cf. Feller \cite{Fel}. Dropping the constraint  $\max_{j\le \ell}v_j\le\tfrac{1}{s}$ in \eqref{A}, we obtain a crucial 
inequality
\begin{equation}\label{C}
 f_{\ell}(s,v_1,\dots,v_{\ell-1})\le \tfrac{s^{\ell-1}}{(\ell-1)!}\cdot g_{\ell}(v_1,\dots, v_{\ell-1}),
 \end{equation}
 where $g_{\ell}(v_1,\dots, v_{\ell-1})$ is the density of $L_1,\dots, L_{\ell-1}$.
 
To use this connection to $L_1,\dots, L_{\ell}$, we will need asymptotic properties of $\L_{\ell}^+=\max_{j\in [\ell]}L_j$ and $U_{\ell}:=\sum_{j\in [\ell]}L_j^2$:
\begin{lemma}\label{lem2} As $\ell\to\infty$, in probability $\tfrac{L_{\ell}^+}{\ell^{-1}\log \ell}\to 1$, and $\ell\, U_{\ell}\to 2$. More sharply, for every $\rho>0$, we have 
\[
\Bbb P\bigl(L_{\ell}^+\le \ell^{-1}(\log\tfrac{\ell}{\log \ell}-\rho)\bigr)=O(\ell^{-d}), \quad
\forall\,d\in (0,e^{\rho}-1).,
\]
and for $\omega(\ell)\to\infty$ however slowly 
\[
\Bbb P\bigl(L_{\ell}^+\le \ell^{-1}\log(\ell\omega(\ell))\bigr)=(1+o(1))\exp\bigl(-\tfrac{1+o(1)}{\omega(\ell)}\bigr)\to 1, 
\]
while, for every $\delta<\tfrac{1}{3}$,
\[
\Bbb P\biggl(\big|\tfrac{\ell\, U_{\ell}}{2}-1\big|\ge \ell^{-\delta}\biggr)=O\bigl(e^{-\Theta(\ell^{1/3-\delta})}\bigr).
\]
\end{lemma}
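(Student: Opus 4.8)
The plan is to pass to the classical representation of uniform spacings via independent exponentials. Let $E_1,\dots,E_\ell$ be i.i.d.\ $\mathrm{Exp}(1)$ random variables and set $\Gamma:=\sum_{i\in[\ell]}E_i$, $M:=\max_{i\in[\ell]}E_i$, $W:=\sum_{i\in[\ell]}E_i^2$. Then
\[
(L_1,\dots,L_\ell)\overset{d}{=}\bigl(E_1/\Gamma,\dots,E_\ell/\Gamma\bigr),\qquad L_\ell^+\overset{d}{=}M/\Gamma,\qquad U_\ell\overset{d}{=}W/\Gamma^2.
\]
Because $\Gamma$ is a sum of i.i.d.\ variables with moment generating function finite on $(-\infty,1)$, a Chernoff bound gives $\Bbb P(|\Gamma-\ell|>u)\le 2\exp(-\Theta(u^2/\ell))$ for $u=o(\ell)$; in particular $\Bbb P(|\Gamma-\ell|>\ell^{2/3})=e^{-\Theta(\ell^{1/3})}$. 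This single fact about $\Gamma$ drives both halves of the lemma, the point being that fluctuations of order $\ell^{2/3}$ are negligible at every scale that follows.

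For $L_\ell^+$, write $\{L_\ell^+\le t\}=\{E_j\le t\Gamma\ \text{for all }j\}$. Taking $\gamma_\pm:=\ell\pm\ell^{2/3}$ and using independence of the $E_j$, one obtains
\[
(1-e^{-t\gamma_-})^\ell-\Bbb P(\Gamma<\gamma_-)\ \le\ \Bbb P(L_\ell^+\le t)\ \le\ (1-e^{-t\gamma_+})^\ell+\Bbb P(\Gamma>\gamma_+),
\]
where both $\Gamma$-error terms are $e^{-\Theta(\ell^{1/3})}$. With $t=\ell^{-1}(\log\tfrac{\ell}{\log\ell}-\rho)$ one computes $\ell e^{-t\gamma_+}=(1+o(1))e^{\rho}\log\ell$, hence $(1-e^{-t\gamma_+})^\ell\le\exp(-\ell e^{-t\gamma_+})=\ell^{-(1+o(1))e^{\rho}}$, which is $O(\ell^{-d})$ for every $d<e^{\rho}$, in particular for the claimed range $d<e^{\rho}-1$. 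With $t=\ell^{-1}\log(\ell\omega(\ell))$ one computes $\ell e^{-t\gamma_\pm}=(1+o(1))/\omega(\ell)$ and $\ell e^{-2t\gamma_\pm}=O\bigl(1/(\ell\,\omega(\ell)^2)\bigr)$, so $(1-e^{-t\gamma_\pm})^\ell=\exp\bigl(-(1+o(1))/\omega(\ell)\bigr)$; the two bounds then agree to this precision, which is the asserted asymptotics (only $\omega(\ell)\to\infty$ with $\log\omega(\ell)=o(\ell^{1/3})$ is used, and this holds whenever $\omega$ grows slowly). Finally, $L_\ell^+/(\ell^{-1}\log\ell)\to1$ in probability follows: the lower half from the first estimate with $\rho=\varepsilon\log\ell-\log\log\ell\to\infty$, and the upper half from the crude union bound $\Bbb P(L_\ell^+>s)\le\ell(1-s)^{\ell-1}\le\ell e^{-(\ell-1)s}$ at $s=(1+\varepsilon)\ell^{-1}\log\ell$.

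For $U_\ell$, the convergence $\ell U_\ell\to2$ in probability is immediate from the representation and the strong law ($W/\ell\to2$, $\Gamma/\ell\to1$ a.s.). The real work is the sharp tail estimate, and here is the obstacle: $E_i^2$ has \emph{no} finite exponential moment, so there is no direct Chernoff bound for $W=\sum_i E_i^2$. The remedy, which is also where the exponent $\tfrac13$ is born, is to truncate at level $\ell^{1/3}$. On the event $\{M\le\ell^{1/3}\}$, of probability at least $1-\ell e^{-\ell^{1/3}}$, one has $W=\sum_i(E_i^2\wedge\ell^{2/3})$, a sum of independent variables bounded by $\ell^{2/3}$, with second moments at most $\Bbb E[E_1^4]=24$ and means within $e^{-\Theta(\ell^{1/3})}$ of $2$. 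Bernstein's inequality then yields, for a small constant $c>0$ and any $\delta<\tfrac13$,
\[
\Bbb P\bigl(|W-2\ell|>c\,\ell^{1-\delta}\bigr)\ \le\ 2\exp\!\Bigl(-\Theta\!\bigl(\tfrac{\ell^{2-2\delta}}{\ell+\ell^{5/3-\delta}}\bigr)\Bigr)+\ell e^{-\ell^{1/3}}\ =\ \exp\bigl(-\Theta(\ell^{1/3-\delta})\bigr),
\]
the middle step using that $\ell^{5/3-\delta}$ dominates $\ell$ for $\delta<\tfrac23$. Coupling this with $\Bbb P(|\Gamma-\ell|>c\,\ell^{1-\delta})=e^{-\Theta(\ell^{1-2\delta})}$ and noting $1-2\delta>\tfrac13-\delta$: on the good event $\Gamma=\ell(1+\theta_1)$ and $W=2\ell(1+\theta_2)$ with $|\theta_1|,|\theta_2|\le\tfrac1{10}\ell^{-\delta}$, so that $\tfrac{\ell U_\ell}{2}-1=(\theta_2-2\theta_1-\theta_1^2)/(1+\theta_1)^2$ has absolute value $<\ell^{-\delta}$ once $\ell$ is large; choosing $c$ small absorbs the constants and gives $\Bbb P\bigl(|\tfrac{\ell U_\ell}{2}-1|\ge\ell^{-\delta}\bigr)=O\bigl(e^{-\Theta(\ell^{1/3-\delta})}\bigr)$.

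In short, the only genuinely delicate step is this concentration of $U_\ell$: the $L_\ell^+$ statements are transparent extreme-value computations once the exponential representation and the $\Gamma$-fluctuation bound are available, while $\sum_i E_i^2$ is heavy-tailed enough to force the truncation-plus-Bernstein argument, and it is precisely the truncation level $\ell^{1/3}$ that pins the rate at $e^{-\Theta(\ell^{1/3-\delta})}$.
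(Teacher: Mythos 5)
Your proof is correct and is in substance the approach the paper indicates: the Note accompanying Lemma~\ref{lem2} says only that the proof rests on the representation $\{L_j\}\overset{d}{=}\{W_j/\sum_i W_i\}$ with i.i.d.\ exponentials, and it cites \cite{Pit3} for the tail of $\ell\,U_\ell$; you work from exactly that representation, so the spirit is identical even though the paper records essentially no computations. What you add that the paper does not spell out is worth noting. First, the sandwiching of $\Gamma$ between $\ell\pm\ell^{2/3}$ cleanly converts the dependent event $\{\max_j E_j\le t\Gamma\}$ into the independent-maximum events needed for the extreme-value asymptotics, and your resulting bound $O(\ell^{-d})$ for all $d<e^{\rho}$ is in fact a bit stronger than the stated range $d\in(0,e^{\rho}-1)$ --- which is fine, it just means the lemma's constant has some slack. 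Second, and more substantively, you give a self-contained derivation of the concentration of $\ell U_\ell$ via truncation of $E_i$ at $\ell^{1/3}$ followed by Bernstein, and you correctly identify that the truncation level is exactly what forces the exponent $\ell^{1/3-\delta}$; this replaces the paper's reliance on the external reference \cite{Pit3}. The one small point to keep in mind, which your remark already acknowledges, is that the ``however slowly'' estimate is established under $\log\omega(\ell)=o(\ell^{1/3})$; since the statement becomes essentially trivial when $\omega$ is large (the probability and the asymptotic expression both tend to $1$ at matching rates), and the phrase ``however slowly'' targets precisely the slow regime, this is not a gap, but one might add a sentence to dismiss the fast-$\omega$ case explicitly.
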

\noindent {\bf Note.\/} 
The proof of  Lemma \ref{lem2}, is based on the following  fact: $\{L_j: j\in [\ell]\}$ has the same distribution as $\bigl\{W_j/\sum_{i\in [\ell]} W_i: j\in [\ell]\bigr\}$, where $W_1,\dots,W_{\ell}$ are i.i.d. exponentials, with the common parameter $1$, say. The large deviation estimate for $\ell\, U_{\ell}$ was proved in Pittel \cite{Pit3}.
\subsubsection{Proof of Theorem \ref{thm1}, part {\bf (i)\/}.}  Since $\Bbb E[S_n]$ is increasing with $p$, it suffices to consider $p=p(n)=\tfrac{\log^2 n}{n}\bigl(1-\a\tfrac{\log\log n}{\log n}\bigr)$, $\a>2$. By \eqref{3}, $\Bbb E[S_n]=n! P_n,$ where
\[
P_n=p^n\!\!\!\!\!\!\!\!\int\limits_{\bold x,\bold y\in [0,1]^n}\prod_{1\le i\neq j\le n}(1-px_iy_j)\,d\bold x\,d\bold y.
\]
So, using $1-\xi\le \exp(-\xi-\xi^2/2)$, $\xi\in (0,1)$, and denoting $s_j=\sum_{i\neq j}x_i$, $t_j=\sum_{i\neq j}x_i^2$, we have
\begin{equation}\label{9}
P_n\le p^n\int\limits_{\bold x\in [0,1]^n}\biggl(\prod_{j\in [n]}\int_0^1\exp\bigl(-pys_j-\tfrac{p^2y^2t_j}{2}\bigr)\,dy\biggr)
d\bold x.
\end{equation}
Consider $\int_1$, the contribution to the RHS of \eqref{9} from $\{\bold x\in [0,1]^n: s\le s(n)\}$, $s:=\sum_{i\in [n]}x_i$, $s(n):=\tfrac{\log n-\be\log\log n}{p}$, $\be\in (1,\a)$. Dropping $\exp\bigl(-\tfrac{p^2y^2t_j}{2}\bigr)$, and integrating over $y\in [0,1]$, we have 
\[
\int_1\le p^n\int\limits_{\bold x: s\le s(n)}\prod_{j\in [n]}F(s_jp)\,d\bold x,\quad F(z):=\tfrac{1-e^{-z}}{z}.
\]
Now,
\begin{equation*}
(\log F(z))'=\tfrac{1}{e^z-1}-\tfrac{1}{z}\left\{\begin{aligned}
&\to -\tfrac{1}{2},&&z\downarrow 0,\\
&\sim -\tfrac{1}{z},&&z\uparrow\infty,
\end{aligned}\right.
\end{equation*}
so that, for $A:=\sup_{z>0}(z+1)\bigl(\tfrac{1}{z}-\tfrac{1}{e^z-1}\bigr)$, we have
\begin{equation}\label{9.1}
\Big|(\log F(z))'\Bigr|\le \tfrac{A}{z+1},
\end{equation}
Consequently
\begin{multline*}
\log F(s_jp)=\log F(sp)-\int\limits_{s-x_j}^s(\log F(yp))'_y\,dy\\
\le \log F(sp)+\tfrac{Apx_j}{(s-x_j)p+1}\le \log F(sp)+\tfrac{Ax_j}{s},
\end{multline*}
whence
$
\prod_{j\in [n]}F(s_jp)\le e^A F^n(sp). 
$

Then, by \eqref{B}, 
\begin{multline*}
\int_1\le e^{A}p^n\int_0^{s(n)}\biggl(\tfrac{1-e^{-sp}}{sp}\biggr)^n\tfrac{s^{n-1}}{(n-1)!}\,ds=\tfrac{e^A}{(n-1)!}\int_0^{ps(n)}\exp(\Phi_n(\eta))\,d \eta, \\
\end{multline*}
$\Phi_n(\eta):=n\log(1-e^{-\eta})-\log\eta$. Here, as $\tfrac{e^{\eta}-1}{\eta}$ increases, 
\begin{align*}
\Phi'_n(\eta)&=\tfrac{n}{e^{\eta}-1}-\tfrac{1}{\eta}=\tfrac{n}{e^{\eta}-1}\bigl(1-\tfrac{e^{\eta}-1}{n\eta}\bigr)\\
&\ge \tfrac{n}{e^{\eta}-1}\bigl(1-\tfrac{n\log^{-\be}n-1}{n(\log n-\be\log\log n)}\bigr)=\tfrac{n}{e^{\eta}-1}(1-O(\log^{-\be-1}n))>0,
\end{align*}
if $n$ sufficiently large, and, as $\tfrac{e^{\eta/2}-e^{-\eta/2}}{\eta}$ increases,
\begin{align*}
\Phi_n''(\eta)&=-\tfrac{n}{(e^{\eta/2}-e^{-\eta/2})^2}+\tfrac{1}{\eta^2}\\
&\le-\tfrac{n}{(e^{\eta/2}-e^{-\eta/2})^2}\bigl(1-\tfrac{\bigl(e^{(\log n-\be\log\log n)/2}-e^{-(\log n-\be\log\log n)/2)}\bigr)^2}{n(\log n-\be\log\log n)^2}\bigr)\\
&=-\tfrac{n}{(e^{\eta/2}-e^{-\eta/2})^2}(1-O(\log^{-2-\be}n))<0,
\end{align*}
if $n$ is sufficiently large. So, that $\Phi'_n(\eta)>0$ and $\Phi_n''(\eta)<0$ for $\eta\in [0,1]$, and $n$ sufficiently large, i.e. $\Phi_n(\eta)$ is concave and decreases on $[0,ps(n)]$. Therefore, for $\eta\in [0,ps(n)]$, we have
$
\Phi_n(\eta)\le \Phi_n(1)+\Phi_n'(1)(\eta-1).
$
Consequently,
\begin{align*}
\int_0^{ps(n)}\exp(\Phi_n(\eta))\,d \eta&\le\tfrac{e^{A}}{(n-1)!} \exp[\Phi_n(ps(n))]
\int_0^1\exp\bigl(\Phi_n'(ps(n))(\eta-1)\bigr)\,d\eta\\
&\le \tfrac{e^{A}}{(n-1)!}\tfrac{\exp[\Phi_n(ps(n))]}{\Phi_n'(ps(n))}.
\end{align*}
So, $n! \int_1$, the contribution of $s\le s(n)$ to $\Bbb E[S_n] (=n! P_n)$, is of order
\begin{multline*}
n\tfrac{(1-e^{-ps(n)})^n}{p(s(n))}\cdot\tfrac{e^{ps(n)}-1}{n}\le \exp\bigl[-n e^{-ps(n)}+ps(n)\bigr]\\
\le \exp\bigl(-ne^{-(\log n-\be\log\log n)}+\log n\bigr)= \exp\bigl(-\log^{\be}n+\log n\bigr)\to 0,
\end{multline*}
since $\be>1$.

Turn to the contribution to the RHS of \eqref{9} from $\bold x$ with $s=\sum_{i\in [n]} x_i\ge s(n)$. The $j$-th integral on the RHS of \eqref{9} is bounded by 
\begin{align*}
\tfrac{1}{s_j}\int_0^{\infty}\exp\bigl(-pz-\tfrac{p^2t_jz^2}{2s^2}\bigr)\,dz
&=\tfrac{1}{s_j}\biggl[\tfrac{1}{p}-\tfrac{pt_j}{s^2}\int_0^{\infty}z\exp\bigl(-pz-\tfrac{p^2t_j z^2}{2s^2}\bigr)\,dz\biggr]\\
&\le\tfrac{1}{s_j}\biggl[\tfrac{1}{p}-\tfrac{pt_j}{s^2}\int_0^{\infty}z\exp\bigl(-pz-\tfrac{p^2 t z^2}{2s^2}\bigr)\,dz\biggr].
\end{align*}
So, 
\begin{multline*}
 p^n\biggl(\prod_{j\in [n]}\int_0^1\exp\bigl(-pys_j-\tfrac{p^2y^2t_j}{2}\bigr)\,dy\biggr)\\
 \le \prod_{j=1}^n\tfrac{1}{s_j}\cdot\prod_{j\in [n]}\biggl[1-\tfrac{p^2t_j}{s^2}\int_0^{\infty}z\exp\bigl(-pz-\tfrac{p^2t z^2}{2s^2}\bigr)\,dz\biggr]\\
 \le s^{-n} \exp\biggl(-\tfrac{p^2}{s^2}\Bigl(\sum_{j\in [n]}t_j\Bigr)\int_0^{\infty}z\exp\bigl(-pz-\tfrac{p^2t z^2}{2s^2}\bigr)\,dz\biggr)
 \prod_{j\in [n]}\bigl(1-\tfrac{x_j}{s}\bigr)^{-1}.
 \end{multline*}
Here
\begin{align*}
\sum_{j\in n]}t_j &= (n-1)t=(n-1)\sum_{j\in [n]}x_j^2,\\
\prod_{j\in [n]}\bigl(1-\tfrac{x_j}{s}\bigr)^{-1}&=\exp\biggl(\sum_{j\in [n]}\bigl(\tfrac{x_j}{s}+O(\tfrac{x_j^2}{s^2})\bigr)\biggr)
=\exp\bigl(1+O(p)\bigr).
\end{align*}
Therefore
\begin{multline*}
 p^n\biggl(\prod_{j\in [n]}\int_0^1\exp\bigl(-pys_j-\tfrac{p^2y^2t_j}{2}\bigr)\,dy\biggr)\\
\le s^{-n}\exp\bigl(1+O(p)\bigr)\exp\biggl(\!-\tfrac{p^2}{s^2}(n-1)t\int_0^{\infty}z\exp\bigl(-pz-\tfrac{p^2t z^2}{2s^2}\bigr)\,dz\!\biggr)\\
=O\biggl[s^{-n}\exp\biggl(\!-\tfrac{(n-1)t}{s^2}\int_0^{\infty}\eta\exp\bigl(-\eta-\tfrac{t \eta^2}{2s^2}\bigr)\,d\eta\!\biggr)\biggr].
\end{multline*}
By convexity of the exponential function, and $\int_{\eta\ge 0}\eta e^{-\eta}\,d\eta=1$, the last integral is at least
\[
\exp\biggl(-\int_0^{\infty} e^{-\eta}\tfrac{t\eta^3}{2s^2}\,d\eta\biggr)=\exp\bigl(-\tfrac{6t}{2s^2}\bigr).\\
\]
Therefore, uniformly for $s=\sum_{j\in [n]}x_j\ge s(n)$, 
\[
p^n\biggl(\prod_{j\in [n]}\int_0^1\exp\bigl(-pys_j-\tfrac{p^2y^2t_j}{2}\bigr)\,dy\biggr)
=O\Bigl(s^{-n}\exp\bigl(-\tfrac{(n-1)t}{s^2}\exp\bigl(-\tfrac{6t}{2s^2}\bigr)\bigr)\Bigr).
\]
Importantly, the last bound is the product of two functions, first dependent only on $s=\sum_{i\in [n]} x_i$, and second dependent only on $t/s^2=\sum_{i\in [n]}\tfrac{x_i^2}{s^2}$. So, by \eqref{C},  the contribution 
of $\{\bold x\in[0,1]^n: s\in [s(n),n]\}$ to the RHS of \eqref{9} is at most of order
\begin{multline*}
\tfrac{1}{(n-1)!}\int\limits_{s=s(n)}^n\!\!\!\!\tfrac{1}{s}\,\Bbb E\biggl[\Bbb I\bigl(L_n^{+}\le \tfrac{1}{s}\bigr)
\exp\Bigl(-(n-1)U_n\cdot e^{-3U_n}\Bigr)\biggr]\,ds\\
\le \tfrac{\log n}{(n-1)!}\Bbb P\bigl(L_n^+\le \tfrac{1}{s(n)}\bigr)=\tfrac{\log n}{(n-1)!}\,\Bbb P\bigl(L_n^+\le \tfrac{p}{\log n-\be\log\log n}\bigr).
\end{multline*}
where $L_n^{+}=\max_{i\in [n]} L_i$ and $U_n=\sum_{i\in [n]}L_i^2$. Here
\begin{multline*}
\tfrac{p}{\log n-\be\log\log n}=\tfrac{\log n}{n}\cdot \tfrac{\log n-\a\log\log n}{\log n-\be\log\log n}\\
=n^{-1}\Bigl(\log n+(\be-\a)\log\log n+O\bigl(\tfrac{(\log\log n)^2}{\log n}\bigr)\Bigr)\\
=n^{-1}\Bigl(\log\tfrac{n}{\log n}+(\be-\a+1+o(1))\log\log n\Bigr)
\end{multline*}
Since $\a>2$, we can pick $\be\in (1,\a-1)$, making $\be-\a+1<0$. Therefore, by Lemma \ref{lem2},
\[
\Bbb P\bigl(L_n^+\le \tfrac{p}{\log n-\be\log\log n}\bigr) = O(n^{-d}),\quad \forall\, d>0.
\]
So, the contribution to the RHS of \eqref{9} from $\bold x$ with $s=\sum_{i\in [n]} x_i\ge s(n)$ is of order
$n!\cdot\tfrac{\log n}{(n-1)!} n^{-d}$, i. e. super-polynomially small. 

We conclude that $\Bbb E[S_n]$ is super-polynomially small as well. The proof of Theorem \ref{thm1} {\bf (i)\/}
is complete.

\subsubsection{Proof of Theorem \ref{thm1}, part {\bf (ii)\/}.} This time we need to sharply bound $P_n$, given by \eqref{3}, from below, which we do by discarding appropriately chosen {\it peripheral\/} parts of the cube $[0,1]^n= \{\bold x: x_j\in [0,1]\}$. And these are the parts that were, in essence, proven to be negligible in the preceding argument.
 
 Let $D$ be defined by the constraints: $\bold x\ge \bold 0$, and 
\begin{equation}\label{11}
\begin{aligned}
&\tfrac{n}{\log(n\omega_1(n))}\le s\le \tfrac{n}{\log(n\omega_2(n))},\\
&s^{-1}x_j\le\tfrac{\log(n\omega_2(n))}{n},\quad j\in [n],\\
&\tfrac{t}{s^2}\le \tfrac{3}{n},
\end{aligned}
\end{equation}
where $\omega_1(n)>\omega_2(n)\to\infty$ are to be specified later. So $D\subset [0,1]^n$, since each $x_j\le 1$. Set
\begin{equation}\label{12}
\hat P_n=p^n\int_{\bold x\in D}\biggl(\prod_{j\in [n]}\biggl(\int_0^1\prod_{i\neq j}(1-px_iy_j)\,dy_j\biggr)\biggr)\,d\bold x;
\end{equation}
so $P_n\ge \hat P_n$. For $\bold x\in D$, $\bold y\in [0,1]^n$, we have $px_iy_j=O(n^{-1}\log^2 n)$. Therefore, denoting
$s_j=\sum_{i\neq j}x_i$, $t_j=\sum_{i\neq j}x_j^2$,
\begin{multline*}
\prod_{i\neq j}(1-px_iy_j)= 
\exp\bigl(-py_js_j-\tfrac{p^2y_j^2t_j}{2}\bigr)\cdot \exp\bigl(O(n^{-2}\log^4 n)\bigr).\\
\end{multline*}
So, 
\begin{multline*}
\prod_{j\in [n]}\biggl(\int_0^1\prod_{i\neq j}(1-px_iy_j)\,dy_j\biggr)\\
\ge\exp\bigl(O(n^{-1}\log^4 n)\bigr)
\biggl(\int_0^1\exp\bigl(-pys-\tfrac{p^2y^2t}{2}\bigr)\,dy\biggr)^n\\
\ge(1+o(1)) (ps)^{-n}\biggl(\int_0^{ps}\exp\bigl(-\eta-\tfrac{\eta^2 t}{2s^2}\bigr)\,d\eta\biggr)^n\\
\ge (1+o(1)) (ps)^{-n}\biggl(\int_0^{ps}\exp\bigl(-\eta-\tfrac{2\eta^2}{n}\bigr)\,d\eta\biggr)^n,
\end{multline*}
since $t/s^2\le 3$ for $\bold x\in D$. By convexity of the exponential function, the last integral exceeds
\begin{multline*}
(1-e^{-ps})\exp\biggl(-\tfrac{2/n}{1-e^{-ps}}\int_0^{ps}e^{-\eta}\eta^2\,d\eta\biggr)\\
=(1-e^{-ps})\exp\biggl(\tfrac{-4/n\bigl[e^{ps}-(1+(ps)+(ps)^2/2\bigr]}{e^{ps}-1}\biggr)
\end{multline*}
Since $s\ge \tfrac{n}{\log(n\omega_1(n))}$, we have $ps\to\infty$ if, say, $\tfrac{\log \omega_1(n)}{\log n}\to 0$. For this $\omega_1(n)$, we obtain then
\[
\int_0^{ps}\exp\bigl(-\eta-\tfrac{2\eta^2}{n}\bigr)\,d\eta\ge (1-e^{-ps}) e^{-4/n}.
\]
Consequently
\[
\prod_{j\in [n]}\biggl(\int_0^1\prod_{i\neq j}(1-px_iy_j)\,dy_j\biggr)\ge e^{-4}\bigl(\tfrac{1-e^{-ps}}
{ps}\bigr)^n,
\]
implying that 
\begin{equation}\label{12}
\hat P_n\ge e^{-4}\int_{\bold x\in D}\bigl(\tfrac{1-e^{-ps}}{s}\bigr)^n\,d\bold x.
\end{equation}
i. e. the integrand is a function of $s$ only. It remains to bound this integral from below. Let us switch to the new variables: $u=\sum_{j\in [n]}x_j(=s)$, $v_j=x_j/s$, $\,j\in [n-1]$. Define also $v_n=x_n/s$, so that $v_j\in [0,1]$ and $\sum_{j\in [n]}v_j=1$. The conditions \eqref{11} become
\begin{equation}\label{13}
\begin{aligned}
&\tfrac{n}{\log(n\omega_1(n))}\le u\le \tfrac{n}{\log(n\omega_2(n))},\\
 &v_j\le \tfrac{\log (n\omega_2(n))}{n},\,j\in [n],\\
 & \sum_{j\in [n]} v_j^2\le \tfrac{3}{n}.
\end{aligned}
\end{equation}
So, the region is the Cartesian product $\bigl[\tfrac{n}{\log(n\omega_1(n))},\tfrac{n}{\log(n\omega_2(n))}\bigr]\times D^*$, where the interval is the range of $u$, and $D^*$ is the range of $\bold v$. The Jacobian of $\bold x$ with respect to $(u,v_1,\dots,v_{n-1})$ is $u^{n-1}$. Recall that $(n-1)!$ is the joint density of $L_1,\dots,L_{n-1}$,
the first $(n-1)$ intervals obtained by selecting independently and uniformly $(n-1)$ points in $[0,1]$. Then, the equation
\eqref{13} is equivalent to
\begin{multline}\label{14}
\hat P_n\ge \tfrac{1}{(n-1)!}\biggl(\int_{\tfrac{n}{\log(n\omega_1(n))}}^{\tfrac{n}{\log(n\omega_2(n))}}
\tfrac{(1-e^{-pu})^n}{u}\,du\biggr)\Bbb P(\bold L\in D^*),\\
\end{multline}
where
\begin{multline*}
=\Bbb P(\bold L\in D^*)=\Bbb P\bigl(\max_{j\in [n]}\tfrac{\log(n\omega_2(n))}{n};\, \sum_{j\in [n]}L_j^2\le \tfrac{3}{n}\bigr) 
\to 1,
\end{multline*}
see Lemma \ref{lem2}. 
Pick $\omega_1(n)=o(\log n)$, and $\omega_2(n)=\tfrac{\omega_1(n)}{2}$. It is easy to see that 
\[
\int_{\tfrac{n}{\log(n\omega_1(n))}}^{\tfrac{n}{\log(n\omega_2(n))}}
\tfrac{1}{u}\,du
\]
is asymptotic to $\tfrac{\log 2}{\log n}$. Furthermore, since $p=\tfrac{\log^2 n}{n}$, 
\[
pu\ge \tfrac{pn}{\log(n\omega_1(n))}=\frac{\log n}{1+\tfrac{\log \omega_1(n)}{\log n}},
\]
and we have
\begin{multline*}
(1-e^{-pu})^n=\exp\bigl(-(1+o(1))ne^{-pu}\bigr)\\
=\exp\biggl(\!-(1+o(1))\exp\Big(\tfrac{\log\omega_1(n)}{1+\tfrac{\log\omega_1(n)}{\log n}}\Bigr)\biggr)\ge \tfrac{1}{\omega_1^2}\ge \log^{-2}n.\\
\end{multline*} 
Hence $\hat P_n\gtrsim\tfrac{e^{-4}\log 2}{(n-1)!\log^3n}$, and using $\Bbb E[S_n]\ge n!\hat P_n$, we conclude that $\Bbb E[S_n]\gtrsim \tfrac{e^{-4}n\log 2}{\log^3 n}\to\infty$. The proof of Theorem \ref{thm1} {\bf (ii)\/} is complete.
\subsection{Likely size of a stable partial matching.}
\begin{theorem}\label{thm2}  Let $\mathcal L_n$ denote the size of the stable partial matching delivered by the $p$-proposal algorithm,
that is the cardinality of the set of men and of the set  of women matched in every stable partial matching. 
If $p=p(n)\gg 1/n$, then for $\ga>2$ 
\[
\Bbb P\bigl(\mathcal L_n\ge n -\tfrac{\ga\log(np)}{p}\bigr)\ge 1-n^{-(1+o(1))(\ga^2-2\ga)\log(np)}.\\
\]
Thus for $\ga>2$, with probability super-polynomially close to $1$, all but at most $\tfrac{\ga\log(np)}{p}$ men and $\tfrac{c\log(np)}{p}$ women, i. e. of order $o(n)$ each,  are matched.
\end{theorem}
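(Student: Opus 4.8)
\textbf{Proof proposal for Theorem \ref{thm2}.}

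The plan is to bound $\Bbb P(\mathcal L_n = \ell)$ for $\ell < n - \tfrac{\ga\log(np)}{p}$ using the union-bound estimate \eqref{6.5}, sum over such $\ell$, and show the total is super-polynomially small. The key simplification is that in \eqref{6.5} we may discard the first product $\prod_{i\neq j}(1-px_iy_j)\le 1$ over the $\ell\times\ell$ block, retaining only the ``peripheral'' factors $\bigl(\prod_i(1-px_i)\prod_j(1-py_j)\bigr)^{n-\ell}$ and the prefactor $p^\ell(1-p)^{(n-\ell)^2}\binom{n}{\ell}^2\ell!$. This decouples the $\bold x$-integral from the $\bold y$-integral, each becoming $\Bigl(\int_{[0,1]^\ell}\prod_{i\in[\ell]}(1-px_i)^{\,n-\ell}\,d\bold x\Bigr)$; and by symmetry the two are equal, so $P_n(\ell)\le p^\ell(1-p)^{(n-\ell)^2}\binom n\ell^2\ell!\,\bigl(J_\ell\bigr)^2$ with $J_\ell=\int_0^1(1-px)^{n-\ell}dx\le \tfrac{1}{p(n-\ell)}$ after the crude bound $\int_0^\infty e^{-p(n-\ell)x}dx$ (legitimate since $1-px\le e^{-px}$). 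This gives the clean estimate
\[
P_n(\ell)\le \frac{p^\ell(1-p)^{(n-\ell)^2}\binom n\ell^2\ell!}{p^2(n-\ell)^2}.
\]

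Next I would estimate this expression asymptotically, writing $m:=n-\ell$ so that we are summing over $m > \tfrac{\ga\log(np)}{p}$. Using $\binom n\ell \le \binom nm\le (en/m)^m$, $\ell!\le \ell^\ell$, $p^\ell=p^{n-m}$, and $(1-p)^{m^2}\le e^{-pm^2}$, one takes logarithms: $\log P_n(\ell)\le m\log(en/m)\cdot 2 + \ell\log\ell + (n-m)\log p - pm^2 - 2\log(pm) + O(1)$. The dominant competition is between the ``entropy'' terms $2m\log(n/m)$ (plus the $\ell\log\ell+(n-m)\log p$ contributions, which should largely cancel against the $n!$-type normalization implicit in comparing to $\Bbb E[S_n]$) and the decay term $-pm^2$. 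Substituting $m=\tfrac{\ga\log(np)}{p}$ makes $pm^2 = \ga^2 \tfrac{\log^2(np)}{p}$ while the entropy term is $\sim 2m\log(n/m)\sim 2\ga\tfrac{\log(np)}{p}\log(np)= 2\ga\tfrac{\log^2(np)}{p}$ (here using $\log(n/m)\sim\log(np)$ since $m=\Theta(\tfrac{\log(np)}{p})$ and $p\gg 1/n$). Hence the exponent is $-(\ga^2-2\ga)\tfrac{\log^2(np)}{p} + (\text{lower order})$, and dividing by $\log n$ (roughly $\tfrac{\log^2(np)}{p\log n}$, which for $p\gg1/n$ is $\gtrsim\log(np)$) yields the stated bound $n^{-(1+o(1))(\ga^2-2\ga)\log(np)}$. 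For $\ga>2$ the coefficient $\ga^2-2\ga>0$, so each term is super-polynomially small, and since there are at most $n$ values of $\ell$ to sum, the union bound $\Bbb P(\mathcal L_n<n-\tfrac{\ga\log(np)}{p})=\sum_{\ell} P_n(\ell)$ is still super-polynomially small.

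The main obstacle I anticipate is controlling the peripheral integral $J_\ell$ and the combinatorial prefactor \emph{jointly} with enough precision that the claimed constant $\ga^2-2\ga$ — rather than a weaker constant — comes out exactly. The crude bound $J_\ell\le\tfrac1{p(n-\ell)}$ costs essentially nothing, but the Stirling estimates for $\binom n\ell^2\ell!$ must be handled carefully: one should write $\binom n\ell^2\ell! = \tfrac{(n!)^2}{(n-\ell)!^2\,\ell!}$ and track the $\ell\log\ell$ versus $(n-m)\log p$ cancellation via the known growth $\Bbb E[S_\ell]\sim e^{-1}\ell\log\ell$ is \emph{not} quite what we want here since we dropped that product; instead the bookkeeping is purely Stirling. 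The secondary subtlety is the regime of $p$: the statement only assumes $p\gg 1/n$, so $np\to\infty$ but $\log(np)$ may be as small as $\omega(1)$; one must verify that all the $o(1)$'s in the exponent are genuinely $o\bigl((\ga^2-2\ga)\log(np)\bigr)$ uniformly, and in particular that for $m$ ranging over the whole tail $(\tfrac{\ga\log(np)}{p},\,n]$ the function $-pm^2+2m\log(n/m)$ is maximized near the left endpoint (it is eventually decreasing in $m$ since the derivative $-2pm+2\log(n/m)-2$ is negative once $pm>\log(n/m)$, i.e. once $m\gtrsim\tfrac{\log(np)}{p}$), so that the left-endpoint value controls the whole sum.
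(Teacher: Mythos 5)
You drop the wrong product in \eqref{6.5}, and the loss is fatal rather than cosmetic. That formula has two parts: the ``core'' product $\prod_{1\le i\neq j\le\ell}(1-px_iy_j)$ and the ``peripheral'' factor $\bigl(\prod_i(1-px_i)\prod_j(1-py_j)\bigr)^{n-\ell}$. You bound the core by $1$ and keep the peripheral; the paper does the opposite. The core product is precisely what keeps the integral tiny: substituting $u_i=\sqrt{p}\,x_i$, $v_j=\sqrt{p}\,y_j$ shows
\[
p^{\ell}\!\!\int\limits_{\bold x,\bold y\in[0,1]^{\ell}}\prod_{1\le i\neq j\le\ell}(1-px_iy_j)\,d\bold x\,d\bold y\;\le\;\int\limits_{\bold u,\bold v\in[0,1]^{\ell}}\prod_{1\le i\neq j\le\ell}(1-u_iv_j)\,d\bold u\,d\bold v=P_\ell\big|_{p=1},
\]
and by \eqref{1} and \cite{Pit1} this is $\sim e^{-1}\ell\log\ell/\ell!$, a $1/\ell!$-sized quantity. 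Once you discard it you are left with $p^{\ell}\ell!\,(1-p)^{m^2}\binom{n}{\ell}^2 J_\ell^2$, where $J_\ell$ must be $\bigl(\int_0^1(1-px)^{n-\ell}\,dx\bigr)^{\ell}$ — an $\ell$-fold power, not the single integral you wrote. Even with the favorable bound $J_\ell\le\bigl(pm\bigr)^{-\ell}$ (with $m:=n-\ell$), the combination $p^{\ell}\ell!\,(pm)^{-2\ell}=\bigl(\ell/(pm^2)\bigr)^{\ell}\cdot\ell!/\ell^{\ell}$ blows up super-exponentially whenever $pm^2\ll\ell$; and for $m\approx\ga\log(np)/p$ one has $pm^2\approx\ga^2\log^2(np)/p\le\ga^2 n\log^2(np)/(np)=o(n)\approx\ell$, so the base $\ell/(pm^2)\to\infty$. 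You sense the trouble yourself (``$\ell\log\ell+(n-m)\log p$\dots should largely cancel against the $n!$-type normalization''), but there is no such normalization: \eqref{6.5} is already a bound on the probability $P_n(\ell)$, and the peripheral-only estimate simply exceeds $1$ by an astronomical margin.

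The paper's proof keeps the core and drops the peripheral factors (each $\le 1$), obtaining \eqref{7}: $P_n(\ell)\le(1-p)^{m^2}\binom n\ell^2\,\Bbb E[S_\ell]$, and then imports $\Bbb E[S_\ell]\le\Bbb E[S_\ell]|_{p=1}=O(\ell\log\ell)$. After that, the bookkeeping reduces to the combinatorial factor $(1-p)^{m^2}\binom nm^2\le e^{f(m)}$ with $f(x)=-px^2+2x\log(en/x)$, its concavity, $f(m_0)\le-(\ga^2-2\ga)\log^2(np)/p$ and $f'(m_0)\to-\infty$ at $m_0=\ga\log(np)/p$, and the rewriting $\log^2(np)/p\ge\log n\cdot\log(np)$. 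Those pieces of your analysis (the entropy-versus-$pm^2$ trade-off, the left-endpoint dominance, the final reduction to $\log n\cdot\log(np)$) are essentially correct and match the paper. What you are missing is the single crucial ingredient that makes the bound non-trivial: the $O(\ell\log\ell)$ asymptotic for the unrestricted $\Bbb E[S_\ell]$, which requires retaining the $\ell\times\ell$ block product rather than throwing it away.
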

\begin{proof} By \eqref{7},
\begin{equation}\label{15}
\Bbb P\bigl(\mathcal L_n\le n -\tfrac{\ga\log(np)}{p}\bigr)\le\!\!\! \sum_{\ell\le n -\tfrac{\ga\log(np)}{p}}\!\!\!(1-p)^{(n-\ell)^2}\binom{n}{\ell}^2\Bbb E[S_{\ell}].
\end{equation}
Here $\Bbb E[S_{\ell}]$ is the expected number of stable matchings for $\ell$ men and $\ell$ women, {\it and\/} $p=1$,
which is of order $O(\ell\log \ell)=O(n\log n)$. Furthermore, 
\[
(1-p)^{j^2}\binom{n}{j}^2\le e^{f(j)},\quad f(x):=-x^2p+2x\log\tfrac{en}{x}.
\]
So,
\[
f\bigl(\tfrac{\ga\log(np)}{p}\bigr)=-\tfrac{\ga^2\log^2(np)}{p}+\tfrac{2\ga\log (np)}{p}\log\tfrac{en}{\tfrac{\ga\log(np)}{p}}
\le -\tfrac{\ga^2-2\ga}{p}\log^2(np).
\]
Next, $f(x)$ is concave, and 
\[
f'\bigl(\tfrac{\ga\log(np)}{p}\bigr)=-\tfrac{2p\ga\log(np)}{p}+2\log\tfrac{np}{\ga\log(np)}
\le -2(\ga-1)\log(np)\to-\infty.\\
\]
Therefore, the RHS of \eqref{15} is of order
\[
(n\log n) \exp\bigl( -\tfrac{\ga^2-2\ga}{p}\log^2(np)\bigr).
\]
Here, since $\tfrac{\log x}{x}$ decreases for $x\ge e$,
\[
\tfrac{\log^2(np)}{p}=n\log(np)\cdot\tfrac{\log(np)}{np}\ge \log n\cdot\log(np).
\]
Therefore 
\begin{multline*}
(n\log n) \exp\bigl( -\tfrac{\ga^2-2\ga}{p}\log^2(np)\bigr)\le (\log n)\exp\bigl(\log n-(\ga^2-2\ga)\log n\cdot \log(np)\bigr)\\
=(\log n)\cdot n^{-(\ga^2-2\ga)\log(np)+1}.
\end{multline*}
\end{proof}

\begin{theorem}\label{thm3} If $p\ge c\tfrac{\log^2 n}{n}$, $c>1+\tfrac{1}{2\Delta}$, then
$\Bbb P(\mathcal L_n\le n-\Delta)\to 0$. In words, if $p>c\tfrac{\log^2 n}{n}$, $c>1$, then the number of unmatched men and women is bounded in probability. In particular, for $c>\tfrac{9}{4}$, then whp $\mathcal L_n=n$.
\end{theorem}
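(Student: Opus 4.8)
The plan is to feed the union bound \eqref{15} from the proof of Theorem \ref{thm2} with the sharper estimates on $\mathbb{E}[S_\ell]$ and on the binomial factors, but now keeping track of the \emph{constants} rather than just orders, since for bounded deficiency $n-\ell$ we can no longer afford the crude $\mathbb{E}[S_\ell]=O(n\log n)$ slack to be swamped. Write $d=n-\ell$ for the number of unmatched men (equivalently women); we must show $\sum_{d\ge \Delta}(1-p)^{d^2}\binom{n}{n-d}^2\mathbb{E}[S_{n-d}]\to 0$. The first step is to bound $\mathbb{E}[S_{n-d}]$ from above. By \eqref{3} with $p=1$ (this is the relevant $p$ since \eqref{7} already extracted the admissibility factor $(1-p)^{d^2}$ and, after keeping only the first product in \eqref{6.5}, the residual integral is exactly $P_{n-d}$ at $p=1$), together with the classical estimate $\mathbb{E}[S_m]=O(m\log m)$ from \cite{Pit1}, we get $\mathbb{E}[S_{n-d}]\le C\,n\log n$ for a fixed constant $C$ and all $d\le n/2$, say; for $d$ close to $n$ one uses the trivial bound $\mathbb{E}[S_{n-d}]\le (n-d)!$, but those terms are killed overwhelmingly by $(1-p)^{d^2}$.

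The second step is the term-by-term estimate. Using $\binom{n}{d}\le (en/d)^d$ and $(1-p)^{d^2}\le e^{-pd^2}$, one has
\[
(1-p)^{d^2}\binom{n}{d}^2\mathbb{E}[S_{n-d}]\le C\,n\log n\cdot\exp\!\left(-pd^2+2d\log\tfrac{en}{d}\right).
\]
Plug in $p\ge c\,\tfrac{\log^2 n}{n}$. The exponent is at most $2d\log(en/d)-c\,d^2\tfrac{\log^2 n}{n}$. Here I would split the range of $d$: for $d$ in a bounded range, $d=O(1)$, the dominant competition is between the prefactor $n\log n$ and the factor $e^{-pd^2}$, which is only a constant — so a bounded number of terms do \emph{not} individually tend to zero, and one must instead show that the \emph{sum} over $\Delta\le d=O(1)$ is small, which forces the hypothesis $c>1+\tfrac{1}{2\Delta}$: indeed for the leading term $d=\Delta$ one needs $-p\Delta^2+2\Delta\log n+\log n+o(\log n)<0$, i.e. $c\Delta^2\tfrac{\log^2 n}{n}\cdot\tfrac{n}{\log^2 n}>(2\Delta+1)\log n / \log n$ after dividing by $\log n$ — wait, more carefully: $p\Delta^2=c\Delta^2\log^2 n/n$ which is $o(\log n)$, so in fact for \emph{fixed} $\Delta$ the factor $e^{-p\Delta^2}$ is $1+o(1)$ and cannot beat $n\log n$. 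So the real mechanism must be different; I would instead not discard the second and third products in \eqref{6.5}.

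The corrected plan, and the main obstacle, is therefore this: one must \emph{retain} the factor $\bigl(\prod_{i}(1-px_i)\prod_j(1-py_j)\bigr)^{n-\ell}=\bigl(\prod_i(1-px_i)\prod_j(1-py_j)\bigr)^{d}$ inside the integral in \eqref{6.5}, rather than dropping it as in the passage to \eqref{7}. For $\mathbf{x},\mathbf{y}$ in the bulk region (where $\sum x_i\asymp n/\log n$, as the analysis of Theorem \ref{thm1}(ii) shows is where the mass sits), each factor $\prod_i(1-px_i)\approx \exp(-p\sum_i x_i)\approx \exp(-\Theta(\log n))=n^{-\Theta(1)}$, so raising to the power $d$ gains an extra $n^{-\Theta(d)}$, and \emph{this} is what beats $\binom{n}{d}^2\sim n^{2d}$ times $\mathbb{E}[S_{n-d}]=O(n\log n)$. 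Concretely, I expect the bulk estimate to yield, for each $d\ge 1$,
\[
(1-p)^{d^2}\binom{n}{d}^2\,(\text{bulk integral})\le \bigl(n^{-(2c-2-\epsilon)\log n}\bigr)^{d}\cdot n\log n,
\]
where the exponent $2c-2$ comes from $2d\log n$ (binomials) minus $2d\cdot p\cdot(n/\log n)=2dc\log n$ (the new factor), minus the $pd^2$ term; the condition $c>1+\tfrac{1}{2\Delta}$ then makes even $d=\Delta$ give $n^{-(1+\Theta(1))}\cdot n\log n = n^{-\Theta(1)}\log n\to 0$ once the $1/(2\Delta)$ margin is used to absorb the $\log n$ prefactor and the geometric-series sum over $d\ge\Delta$. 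The tail region $\sum x_i$ small or large is handled exactly as in the proof of Theorem \ref{thm1}(i) via Lemma \ref{lem2} (the $L_n^+$ and $U_n$ estimates), contributing a super-polynomially small amount. The main obstacle is doing the bulk integral with the retained product honestly — i.e. justifying that the integrand $\bigl(\tfrac{1-e^{-ps}}{s}\bigr)^{n-d}\prod(1-px_i)^d\prod(1-py_j)^d$, after the change of variables $s=\sum x_i$, $v_i=x_i/s$ of Theorem \ref{thm1}(ii), concentrates on $s\asymp n/\log n$ and there contributes the claimed power of $n$; the finitely many small exceptional terms with $d<\Delta$ are simply not summed. Finally, for $c>9/4$ one checks $2c-2>2.5>2+\tfrac{1}{2\cdot 1}$, so the bound applies with $\Delta=1$, giving $\mathbb{P}(\mathcal{L}_n\le n-1)\to 0$, i.e. whp $\mathcal{L}_n=n$.
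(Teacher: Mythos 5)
Your high-level plan is the right one and matches the paper's architecture: the crude bound \eqref{7} (which discards the factor $\bigl(\prod_i(1-px_i)\prod_j(1-py_j)\bigr)^{n-\ell}$) is indeed useless for bounded $d=n-\ell$, and the correct move is to feed \eqref{15.01} with the full $I_{n,\ell}$ and extract the exponential gain from the retained product. What is wrong is the quantitative step at the heart of the plan: locating the ``bulk'' at $s\asymp n/\log n$ and reading off $e^{-2dps}\approx n^{-2dc}$. Once the factor $e^{-d\cdot ps}$ (and its $y$-side companion) is inserted, the integral no longer concentrates where the $\Bbb E[S_\ell]$ integral does; the paper instead performs an honest Laplace analysis. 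After the bounds $1-z\le e^{-z}$ and $\tfrac{u}{u+pd}\le\exp\bigl(-\tfrac{pd}{u+pd}\bigr)$, the log-integrand in $u=ps$ becomes $\mathcal H(u)=-d\bigl(u+\tfrac{p(\ell-1)}{u+pd}\bigr)$, which is concave with maximizer $\bar u=\sqrt{p(\ell-1)}-pd\approx\sqrt{np}=\sqrt{c}\,\log n$ and maximal value $\mathcal H(\bar u)\approx -2d\sqrt{np}=-2d\sqrt{c}\,\log n$. This produces the $\ell$-th term of order $n\,(np)^{1/4}\exp\bigl[2d(1-\sqrt{c}\,(1+o(1)))\log n\bigr]$, so the summability condition is $\sqrt{c}>1+\tfrac{1}{2\Delta}$, not $c>1+\tfrac{1}{2\Delta}$. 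The ``in particular'' clause confirms this: $\Delta=1$ requires $\sqrt{c}>3/2$, i.e.\ $c>9/4$, which would be inexplicable if the true threshold were $c>3/2$. (The displayed condition $c>1+\tfrac{1}{2\Delta}$ in the theorem statement appears to be a typographical slip; it should read $\sqrt{c}>1+\tfrac{1}{2\Delta}$.)

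There is also an internal inconsistency in your bulk estimate even granting the wrong location. With $p=c\tfrac{\log^2 n}{n}$ the $\Bbb E[S_\ell]$ integral concentrates where $ps\approx\log\ell\approx\log n$, i.e.\ where $s\approx\tfrac{n}{c\log n}$, not where $s\approx\tfrac{n}{\log n}$. At that point $e^{-2dps}\approx n^{-2d}$, which \emph{exactly} cancels $\binom{n}{d}^2\sim n^{2d}$ and yields no net gain; the residual $n\log n$ would make every term blow up, so the argument as written does not close. The true balance point $\bar u\approx\sqrt{c}\log n$ sits strictly between $\log n$ and $c\log n$, and pinning it down is exactly what the Laplace calculation of $\mathcal H(\bar u)$ accomplishes. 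Concretely you should: write the $\ell$-th term as $p^\ell(1-p)^{d^2}\binom{n}{\ell}^2\ell!\,I_{n,\ell}$; bound $I_{n,\ell}$ by passing to the single variable $u=ps$ and using \eqref{9.1} and $\tfrac{u}{u+pd}\le e^{-pd/(u+pd)}$ to reach $\int_0^{p\ell}e^{\mathcal H(u)}\,du$; verify $\mathcal H''<0$ and compute $\mathcal H(\bar u)$, $\mathcal H''(\bar u)$; then the $\ell$-th term is $O\bigl(n(np)^{1/4}\exp\bigl[2d(\log n-\sqrt{np}\,(1+o(1)))\bigr]\bigr)$, and summing over $d\ge\Delta$ gives the claim precisely when $\sqrt{c}>1+\tfrac{1}{2\Delta}$.
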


\begin{proof}
 By Theorem \ref{thm2} and \eqref{7}, 
\begin{equation}\label{15.01}
\Bbb P(\mathcal L_n\le n-\Delta)\le n^{-\Theta(\log(np))}+\sum_{\ell=n-cp^{-1}\log(np)}^{n-\Delta}p^{\ell}\binom{n}{\ell}^2\ell! \,I _{n,\ell}.
\end{equation}
Here, denoting $s_j=\sum_{i\in [\ell]\setminus \{j\}}x_j$, $s=\sum_{i\in [\ell]}x_i$, and using $1-z\le e^{-z}$,  we bound
\begin{multline}\label{15.02}
I_{n,\ell}=\!\!\!\!\int\limits_{\bold x,\bold y\in [0,1]^{\ell}}\!\prod_{1\le i\neq j\le \ell}\!\!(1-px_iy_j)\prod_{i\in [\ell]}\!(1-px_i)^{n-\ell}\cdot \prod_{j\in [\ell]}(1-py_j)^{n-\ell}\,d\bold x \,d\bold y\\
\le\int\limits_{\bold x,\,\bold y\in [0,1]^{\ell}}\exp\biggl(-p(n-\ell)s -\sum_{j\in [\ell]}y_j(ps_j+p(n-\ell))\biggr)\,d\bold x\,d\bold y\\
=\int\limits_{\bold x\in [0,1]^{\ell}}e^{-p(n-\ell)s}\,\prod_{j\in [\ell]}\tfrac{1-e^{-ps_j-p(n-\ell)}}{ps_j+p(n-\ell)}\,\,d\bold x\\
\le e^{A}\int\limits_{s\in [0,\ell]}e^{-p(n-\ell)s}\biggl(\tfrac{1-e^{-ps-p(n-\ell)}}{ps+p(n-\ell)}\biggr)^{\ell}\cdot\tfrac{s^{\ell-1}}{(\ell-1)!}\,ds\\
\le \tfrac{e^{A}}{p^{\ell}(\ell-1)!}\int\limits_0^{p\ell}e^{-(n-\ell)u}\biggl(\tfrac{u(1-e^{-u-p(n-\ell)})}{u+p(n-\ell)}\biggr)^{\ell-1}\,du
\\
\le \tfrac{e^{A}}{p^{\ell}(\ell-1)!}\int\limits_0^{p\ell}\exp\Bigl[-(n-\ell)\Bigl(u+\tfrac{p(\ell-1)}{u+p(n-\ell)}\Bigr)\Bigr]\,du.
\end{multline}
The factor $e^A$ came from \eqref{9.1}, and we used $\tfrac{u}{u+p(n-\ell)}\le \exp\bigl(-\tfrac{p(n-\ell)}{u+p(n-\ell)}\bigr)$. We bound the bottom integral using Laplace method. Observe that, denoting by $\mathcal H(u)$ the logarithm of the integrand, 
\begin{align*}
\mathcal H'_u&=-(n-\ell)\Bigl(1-\tfrac{p(\ell-1)}{(u+p(n-\ell))^2}\Bigr),\\
\mathcal H''_u&=-(n-\ell)\tfrac{2p(\ell-1)}{(u+p(n-\ell))^3},
\end{align*}
implying concavity of $\mathcal H$. It follows that $\mathcal H$ attains its maximum at 
\[
\bar u=\bar u(\ell)=\sqrt{p(\ell-1)}-p(n-\ell),
\]
and that 
\begin{align*}
\mathcal H(\bar u)&=-(n-\ell)\Bigl(2\sqrt{p(\ell-1)}-p(n-\ell)\Bigr)=-2(n-\ell)\sqrt{np}(1+o(1)), \\
\mathcal H''(\bar u)&=-\tfrac{2(n-\ell)}{\sqrt{p(\ell-1)}},
\end{align*}
since $p(n-\ell) =O(\log (np))=o(\sqrt{pn})$.  So, approximating the bottom integrand in \eqref{15.02} by $\exp\bigl(\mathcal H(\bar u)+\tfrac{\mathcal H''(\bar u)}{2}(u-\bar u)^2\bigr)$, we easily obtain that the integral itself  is of order $\frac{\exp(\mathcal H(\bar u))}{\sqrt{-\mathcal H''(\bar u)}}$. Consequently, the $\ell$-th term on the RHS of \eqref{15.01} is of order
\begin{multline*}
p^{\ell}\ell!\cdot\tfrac{n^{2(n-\ell)}}{p^{\ell} (\ell-1)!}\tfrac{\exp\bigl(-2(n-\ell)\sqrt{np}(1+o(1))\bigr)}{\sqrt{\tfrac{n-\ell}{\sqrt{p(\ell-1)}}}}\\
=O\Bigl[n (pn)^{1/4}\cdot\exp\bigl[2(n-\ell)\bigl(\log n - \sqrt{np}(1+o(1))\bigr]\Bigr]\\
=O\Bigl[n\exp\bigl[2(n-\ell)\bigl(\log n-(1+o(1))c^{1/2}\log n\bigr)\bigr]\Bigr],\\
\end{multline*}
if $p\ge c\tfrac{\log^2 n}{n}$. So, for $c>1$, the sum over $\ell\le n-\Delta$ on the RHS of \eqref{15.01} is of
order
\begin{multline*}
n \cdot\exp\bigl[2\Delta\bigl(\log n-(1+o(1))c^{1/2}\log n \bigr)\bigr]\\=O\Bigl(\cdot\exp\bigl[(2\Delta+1 -(1+o(1)))2 \Delta c^{1/2}\log n\bigr]\Bigr)\to 0,
\end{multline*}
if $c> \bigl(1+\tfrac{1}{2\Delta}\bigr)^{1/2}$. The proof of Theorem \ref{thm3} is complete.
\end{proof}

\begin{theorem}\label{thm3.5} For $p\le c\tfrac{\log^2 n}{n}$, $c<1$, and $\Delta_n=n^{1-\sqrt{c}} \log^{-2} n$, we have $\Bbb P(\mathcal L_n\ge n-\Delta_n)\to 0$ super-polynomially fast,
so that whp at least $\Delta_n$ men and women remain unmatched.
\end{theorem}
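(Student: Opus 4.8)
The plan is to estimate $\Bbb P(\mathcal L_n\ge n-\Delta_n)$ by a union bound over the size $\ell=n-j$ of the partial matching delivered by the proposal algorithm. Inequality \eqref{7}, together with $(1-p)^{(n-\ell)^2}\le1$, gives
\[
\Bbb P(\mathcal L_n\ge n-\Delta_n)\ \le\ \sum_{j=0}^{\Delta_n}\binom{n}{j}^{2}\,\Bbb E[S_{n-j}],
\]
where $\Bbb E[S_{n-j}]$ is the expected number of complete stable matchings in a market of $n-j$ men and $n-j$ women with the \emph{same} admissibility probability $p$. Because $g(x):=x^{-1}\log^{2}x$ is decreasing and $j\le\Delta_n=o(n)$, we have $p\le c\,g(n)=c\,g(n-j)(1-o(1))$ with $c<1$, so $p$ is subcritical for a size-$(n-j)$ market by a fixed multiplicative margin; by the monotonicity of $\Bbb E[S_\ell]$ in $p$ (the remark after \eqref{3}) it suffices to bound $\Bbb E[S_{n-j}]$ at $p=c\,g(n)$. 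The whole point is that this quantity is not merely $o(1)$ (Theorem~\ref{thm1}(i)) but stretched‑exponentially small, and I need the rate.

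The main step is the bound
\[
\Bbb E[S_{\ell}]\ \le\ \exp\bigl(-(1-o(1))\,\ell^{\,1-\sqrt{c'}}\bigr),\qquad c':=\tfrac{p\,\ell}{\log^{2}\ell}=c(1-o(1)),
\]
obtained by rerunning the proof of Theorem~\ref{thm1}(i) for a size‑$\ell$ market. Since $p=c'\log^{2}\ell/\ell$ with $c'$ bounded away from $1$, the effective ``$\a$'' there is $(1-c')\log\ell/\log\log\ell\to\infty$, so the free parameter $\be\in(1,\a-1)$ of that proof may be taken to grow, and I would choose it \emph{optimally}: write $\be=b\,\tfrac{\log\ell}{\log\log\ell}$ with $b\in(0,1-c')$ (so $\be\in(1,\a-1)$ eventually), so that $s(\ell)=\tfrac{\log\ell-\be\log\log\ell}{p}$ satisfies $\ell\,e^{-ps(\ell)}=\ell^{b}$. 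Then the contribution of $s\le s(\ell)$ to $\Bbb E[S_\ell]$ is $\le\exp(-\ell^{b}(1-o(1)))$, exactly as in that proof, while the contribution of $s\ge s(\ell)$ equals, up to a constant, $\ell\log\ell\cdot\Bbb P\bigl(L_\ell^{+}\le s(\ell)^{-1}\bigr)$, which by Lemma~\ref{lem2} — with $e^{\rho}=\ell^{(1-b-c')/(1-b)}/\log\ell$ in the notation there — is $\le\exp\bigl(-\ell^{(1-b-c')/(1-b)}(1-o(1))\bigr)$. Setting $b=1-\sqrt{c'}$ balances the two exponents; this $b$ lies in $(0,1-c')$ precisely because $c'<1$, and the common exponent is $1-\sqrt{c'}$. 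Since $\ell=n-j=n(1-o(1))$ and $c'=c(1-o(1))$ uniformly for $j\le\Delta_n$, this gives $\Bbb E[S_{n-j}]\le\exp(-(1-o(1))\,n^{1-\sqrt c})$ uniformly in that range. Carrying out this optimization cleanly — checking that the concavity and asymptotic steps in the proof of Theorem~\ref{thm1}(i) survive a slowly growing $\be$, and that all the error terms are uniform in $j$ — is where I expect the real work to be.

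What remains is bookkeeping. For $0\le j\le\Delta_n$,
\[
\binom{n}{j}^{2}\le\Bigl(\tfrac{en}{\Delta_n}\Bigr)^{2\Delta_n}=\exp\Bigl(2\Delta_n\log\bigl(e\,n^{\sqrt c}\log^{2}n\bigr)\Bigr)=\exp\Bigl(\tfrac{(2\sqrt c+o(1))\,n^{1-\sqrt c}}{\log n}\Bigr)=e^{\,o(n^{1-\sqrt c})},
\]
and it is exactly the factor $\log^{-2}n$ in $\Delta_n=n^{1-\sqrt c}\log^{-2}n$ that forces this exponent down to $o(n^{1-\sqrt c})$, hence negligible against the $n^{1-\sqrt c}$ in $-\log\Bbb E[S_{n-j}]$. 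Multiplying the two bounds and summing the at most $n$ terms,
\[
\Bbb P(\mathcal L_n\ge n-\Delta_n)\ \le\ n\cdot e^{\,o(n^{1-\sqrt c})}\cdot\exp\bigl(-(1-o(1))\,n^{1-\sqrt c}\bigr)=\exp\bigl(-(1-o(1))\,n^{1-\sqrt c}\bigr)\to0,
\]
which is the asserted super‑polynomial (in fact $\exp(-n^{\Theta(1)})$) decay. The hypothesis $c<1$ is used twice: to guarantee $1-\sqrt c>0$, so that $\Delta_n\to\infty$, and to make the balancing exponent $1-\sqrt{c'}$ strictly positive.
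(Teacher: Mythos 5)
Your proof is correct and follows essentially the same route as the paper's: the same union bound via \eqref{7}, the same rerunning of the Theorem~\ref{thm1}(i) argument at size $\ell\sim n$, and the same balance at $\sigma=\sqrt{c'}$ (your $b=1-\sqrt{c'}$) giving $\Bbb E[S_\ell]\le\exp(-\Theta(\ell^{1-\sqrt{c}}))$ against the $\binom{n}{j}^2 = e^{o(n^{1-\sqrt c})}$ entropy factor. The only cosmetic difference is that the paper parameterizes the cut directly as $s(\ell)=\sigma\log\ell/p$ rather than through a growing $\be$; the bookkeeping is the same.
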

\begin{proof} By \eqref{7},
\begin{equation}\label{15.2}
\Bbb P\bigl(\mathcal L_n\ge n -\Delta_n\bigr)\le\!\!\! \sum_{\ell\ge  n -\Delta_n}\binom{n}{\ell}^2\Bbb E[S_{\ell}]\le
n^{2\Delta_n}\sum_{\ell\ge  n -\Delta_n}\Bbb E[S_{\ell}].
\end{equation}
The condition $p\le c\tfrac{\log^2 n}{n}$ is equivalent to $p\le c(\ell)\tfrac{\log^2\ell}{\ell}$ for $c(\ell):=c\tfrac{n^{-1}\log^2 n}{\ell^{-1}\log^2\ell}\le c$, for $\ell\ge n-\Delta_n$. Notice at once that $c(\ell)=c\exp(O((n-\ell)/n))=c(1+O(n^{-\sqrt{c}}))$. Now, 
\[
\Bbb E[S_{\ell}]=\ell! P_{\ell},\quad P_{\ell} =p^n\!\!\!\!\!\!\!\!\int\limits_{\bold x,\bold y\in [0,1]^{\ell}}\prod_{1\le i\neq j\le \ell}(1-px_iy_j)\,d\bold x\,d\bold y.
\]
Consider $\int_1$, the contribution to $P_{\ell}$ from $\{\bold x\in [0,1]^{\ell}: s\le s(\ell)\}$, $s(\ell):=\tfrac{\sigma \log\ell}{p}$, where $\sigma\in (c(\ell),1)$. Arguing closely to the corresponding part in the proof of Theorem \ref{thm1}, part {\bf (i)\/}, we
obtain that $\ell! \int_1$ is of order 
\begin{multline*}
\ell\tfrac{(1-e^{-ps(\ell)})^n}{p(s(\ell))}\cdot\tfrac{e^{ps(\ell)}-1}{n}\le \exp\bigl[-\ell e^{-ps(\ell)}+ps(\ell)\bigr]\\
\le \exp\bigl(-\ell^{1-\sigma}+\log \ell\bigr)\le \exp\bigl(-0.5\ell^{1-\sigma}\bigr).
\end{multline*}
Following the second part in the proof of Theorem \ref{thm1}, part {\bf (i)\/}, we obtain that the contribution to $P_{\ell}$ 
from $\{\bold x\in [0,1]^{\ell}: s\ge s(\ell)\}$ is at most of order
\begin{multline*}
\tfrac{1}{(\ell-1)!}\int\limits_{s=s(\ell)}^{\ell}\!\!\!\!\tfrac{1}{s}\,\Bbb E\biggl[\Bbb I\bigl(L_{\ell}^{+}\le \tfrac{1}{s}\bigr)
\exp\Bigl(-(\ell-1)U_{\ell}\cdot e^{-3U_{\ell}}\Bigr)\biggr]\,ds\\
\le \tfrac{\log \ell}{(\ell-1)!}\Bbb P\bigl(L_{\ell}^+\le \tfrac{1}{s(\ell)}\bigr)=\tfrac{\log \ell}{(\ell-1)!}\,\Bbb P\bigl(L_{\ell}^+\le \tfrac{p}{\sigma\log\ell}\bigr)\le \tfrac{\log \ell}{(\ell-1)!}\,\Bbb P\bigl(L_{\ell}^+\le\tfrac{c(\ell)}{\sigma}\cdot \tfrac{\log\ell}{\ell}\bigr),
\end{multline*}
where $L_{\ell}^{+}=\max_{i\in [\ell]} L_i$ and $U_{\ell}=\sum_{i\in [\ell]}L_i^2$. 
So, the contribution to $\Bbb E[S_{\ell}]$  from $\bold x\in [0,1]^{\ell}$ with $s\ge s(\ell)$ is of order
$\ell (\log \ell) \exp\bigl(-\Theta(\ell^{1-c(\ell)/\sigma})\bigr)=\exp\bigl(-\Theta(\ell^{1-c(\ell)/\sigma})\bigr)$
Therefore $\Bbb E[S_{\ell}]$ is of order 
$
\exp\bigl(-\Theta(\ell^{\min(1-\sigma,1-c(\ell)/\sigma)})\bigr)$, and we get the best estimate 
$\exp\bigl[-\Theta\bigl(\ell^{1-\sqrt{c(\ell)}}\bigr)\bigr]$ by selecting $\sigma=\sqrt{c(\ell)}$. 
So, using \eqref{15.2}, $\sqrt{c(\ell)}=(1+O(n^{-\sqrt{c}}))\sqrt{c}$, and $\Delta_n=n^{1-\sqrt{c}} \log^{-2} n$, we conclude that 
\[
\Bbb P\bigl(\mathcal L_n\ge n -\Delta_n\bigr)\le\exp\Bigl(2\Delta_n\log n-\Theta\bigl(n^{1-\sqrt{c}}\,\bigr)\Bigr)
=\exp\Bigl(-\Theta\bigl(n^{1-\sqrt{c}}\,\bigr)\Bigr),
\]
The proof of Theorem \ref{thm3.5} is complete.
\end{proof}

\subsection{Asymptotic ranks of stable partners.} Let $\mathcal Q_n^{(-)}$, $\mathcal Q_n^{(+)}$ denote the smallest total rank and the largest total rank of wives in a (possibly partial) stable matching among all admissible women partners. They are equi-distributed with, respectively, 
 $\mathcal R_n^{(-)}$, $\mathcal R_n^{(+)}$, the extreme total ranks of husbands among admissible men in a stable matching. 
 \begin{theorem}\label{thm4} Suppose that $p=c\tfrac{\log^2 n}{n}$, $c<1$. For $\eps\in (0,1)$ we have:
whp $\lfloor (1-\eps)(pn^3)^{1/2}\rfloor\le \mathcal Q_n^{(-)}\le \mathcal Q_n^{(+)}\le  \lfloor (1+\eps)(pn^3)^{1/2}\rfloor$. In words, for $p$ strictly below $\tfrac{\log^2 n}{n}$, whp the total rank of  stable wives (husbands, resp.) among admissible partners is asymptotically the same for every stable matching.
\end{theorem}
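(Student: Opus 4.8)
\emph{Set‑up.} Since $\mathcal Q_n^{(-)}$ and $\mathcal Q_n^{(+)}$ are the smallest and largest values of $Q(M)$, the total rank of wives among admissible partners, over all (necessarily size‑$\mathcal L_n$) stable partial matchings $M$, the claim is equivalent to: whp no stable partial matching $M$ has $Q(M)$ outside $I_\epsilon:=[(1-\epsilon)(pn^3)^{1/2},(1+\epsilon)(pn^3)^{1/2}]$. I would bound the probability of the bad event by $\mathbb E\big[\#\{M\text{ stable},\ Q(M)\notin I_\epsilon\}\big]$ and estimate this by the union bound over the matched sets $T_1,T_2$ and the bijection $M$ between them, using homogeneity and the hybrid identity \eqref{6.6}; recall from \eqref{6.5} that the corresponding unconstrained sum equals $Z_{n,\ell}:=\binom n\ell^2\ell!\,p^{\ell}(1-p)^{(n-\ell)^2}\!\int\prod_{i\ne j}(1-px_iy_j)\big(\prod_i(1-px_i)\prod_j(1-py_j)\big)^{n-\ell}d\mathbf x\,d\mathbf y$, the expected number of stable partial matchings of size $\ell$. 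The constraint $Q(M)\notin I_\epsilon$ is extracted from \eqref{6.6} in the usual way, by $\sum_{m\ge M_0}[\xi^m]g(\xi)\le(1+\delta)^{-M_0}g(1+\delta)$ and $\sum_{m\le M_0}[\xi^m]g(\xi)\le(1-\delta)^{-M_0}g(1-\delta)$ with $M_0=k_0-\ell$ for $k_0$ the relevant endpoint; since $g(1\pm\delta)/g(1)=\prod_{i\ne j}\big(1\pm\delta\,\tfrac{px_i(1-y_j)}{1-px_iy_j}\big)\le\exp(\pm\delta R(\mathbf x,\mathbf y))$, where $R(\mathbf x,\mathbf y):=\sum_{i\ne j}\tfrac{px_i(1-y_j)}{1-px_iy_j}=\mathbb E[\,Q(M)-\ell\mid M\text{ stable},\mathbf x,\mathbf y\,]$, this costs only a Chernoff‑type tilt against the annealed integrand of $Z_{n,\ell}$. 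By Theorems \ref{thm2} and \ref{thm3.5} I would first confine $\ell$ to the likely window $n-\gamma p^{-1}\log(np)\le\ell\le n-\Delta_n$ (its complement is super‑polynomially unlikely and carries no stable matching there); throughout it $\ell=(1+o(1))n$ and $p^{-1}=o(n)$.

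\emph{Why $(pn^3)^{1/2}$, and the split.} Conditioned on $M$ stable and on the matched‑pair values, $Q(M)-\ell$ is a sum of conditionally independent Bernoulli variables with mean $R(\mathbf x,\mathbf y)$. Integrating $\mathbf y$ out of the annealed integrand and using the density \eqref{B} of $\sum_i x_i$ together with Lemma \ref{lem2}, that integrand concentrates where $s:=\sum_ix_i\approx s_*:=(\ell/p)^{1/2}$, and symmetrically where $\sum_jy_j\approx s_*$; indeed the $s$‑integrand is, up to lower order, $\tfrac{s^{\ell-1}}{(\ell-1)!}\,\mathbb P(L_\ell^+\le1/s)\,\big(\tfrac{1-e^{-p(s+n-\ell)}}{p(s+n-\ell)}\big)^{\ell}e^{-(n-\ell)ps}$, whose logarithmic derivative vanishes at $s_*$ (there the spacing factor $\mathbb P(L_\ell^+\le1/s)$ is negligible when $n-\ell\gg n^{1-\sqrt c}$, and when $n-\ell$ is smaller it only shifts the saddle slightly). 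At such $(\mathbf x,\mathbf y)$, $R(\mathbf x,\mathbf y)=(1+o(1))p\,s_*\,\ell\sim(p\ell^3)^{1/2}\sim(pn^3)^{1/2}$ — this is the asserted order. Next, for each admissible $\ell$ I would split the $(\mathbf x,\mathbf y)$‑integral according to whether $R(\mathbf x,\mathbf y)$ lies within $(1\pm\tfrac\epsilon2+o(1))(pn^3)^{1/2}$ or not. On the "good" event a Chernoff bound for the Bernoulli sum gives $\mathbb P\big(Q(M)-\ell\ \text{beyond}\ M_0\mid M\text{ stable},\mathbf x,\mathbf y\big)\le e^{-c(\epsilon)(pn^3)^{1/2}}=n^{-\Theta(n)}$, which, multiplied by $Z_{n,\ell}=e^{o(n\log n)}$, still tends to $0$ super‑polynomially.

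\emph{The bad event and the main obstacle.} On the complementary event — which forces $\sum_ix_i$ or $\sum_jy_j$ to deviate from $s_*$ by a factor $1\pm\Theta(\epsilon)$ — coefficient extraction yields no gain, so one needs the weighted measure of this event to be negligibly small relative to $Z_{n,\ell}$. The available dampings are: for $\sum_ix_i$ (or $\sum_jy_j$) above $s_*$, the tilt $\prod_i(1-px_i)^{n-\ell}\le\exp(-(n-\ell)p\sum_ix_i)$ together with $\mathbb P(L_\ell^+\le1/\!\sum_ix_i)\le\exp(-\ell e^{-\ell/\sum_ix_i})$; below $s_*$, the factor $(\sum_ix_i)^{\ell-1}/(\ell-1)!$. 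A Laplace estimate then bounds the bad‑event contribution, after the combinatorial prefactor, by $Z_{n,\ell}\,e^{-\Theta(\epsilon^2(n-\ell)p s_*)}$ with $(n-\ell)p s_*=(n-\ell)\sqrt{\ell p}=\Theta\big((n-\ell)\log n\big)$; summing over the $O(n)$ admissible $\ell$ then finishes the proof. The hard part, I expect, is making precisely this last estimate uniform in $\ell$: $Z_{n,\ell}$ need not be $o(1)$ — the factor $(1-p)^{(n-\ell)^2}\binom n\ell^2$ can push it up to $e^{\Theta(n^{1-\sqrt c/2})}$ for $n-\ell$ of order $n^{1-\sqrt c/2}$ — so one must check that on exactly that range of $\ell$ the damping $e^{-\Theta(\epsilon^2(n-\ell)\log n)}$ wins, i.e.\ $\epsilon^2(n-\ell)\sqrt{\ell p}\gg\log Z_{n,\ell}$, which holds because $(n-\ell)\sqrt{\ell p}$ carries an extra $\log n$ over $\log Z_{n,\ell}$ there, while for smaller or larger $n-\ell$ either $Z_{n,\ell}$ is itself super‑polynomially small or the spacing factor $\mathbb P(L_\ell^+\le1/s)$ already suffices. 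This is a quantitative refinement — now tracking the generating‑function variable — of the "negligible peripheral parts" computations carried out in the proofs of Theorems \ref{thm1} and \ref{thm3.5}.
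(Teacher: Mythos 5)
Your plan matches the paper's architecture: bound the bad event by the expected number of stable partial matchings with the wrong total rank, use Theorems~\ref{thm2} and~\ref{thm3.5} to confine $\ell$ to $[\ell_n,n-\Delta_n)$, extract the rank constraint from the generating function in~\eqref{6.6} by a Chernoff tilt, reduce to a one-dimensional $s$-integral via~\eqref{B}, and run a Laplace estimate whose saddle sits at $s_*\sim\sqrt{\ell/p}$, where indeed $Q\sim\sqrt{p\ell^3}\sim\sqrt{pn^3}$. Your split of the $\mathbf x$-domain into ``$R$ typical / $R$ atypical'' is organizationally different from the paper's choice of a piecewise tilt $\xi(u)$ that switches at $u=k/\ell$, but the two are the same estimate dressed differently, and you correctly identify the right scales throughout.

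There is, however, a quantitative slip in your last paragraph which, as written, would not close the argument. You assert that for $n-\ell$ of order $n^{1-\sqrt c/2}$ the combinatorial prefactor pushes $\log Z_{n,\ell}$ up to $\Theta(n^{1-\sqrt c/2})$ and that the damping $\epsilon^2(n-\ell)\sqrt{\ell p}$ ``carries an extra $\log n$.'' Both halves of that sentence are off: $\log\binom n\ell^2\approx 2(n-\ell)\log\tfrac n{n-\ell}=\Theta\bigl(n^{1-\sqrt c/2}\log n\bigr)$ — the $\log n$ is already there — while $\epsilon^2(n-\ell)\sqrt{\ell p}=\Theta\bigl(\epsilon^2 n^{1-\sqrt c/2}\log n\bigr)$ is of the \emph{same} order, so the crude comparison $\epsilon^2(n-\ell)\sqrt{\ell p}\gg\log Z_{n,\ell}$ fails for small $\epsilon$. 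What actually saves the day, and what the paper's computation of $\int_1,\int_2$ delivers, is a cancellation that your sketch does not track: the factors $\bigl(\prod_i(1-px_i)\prod_j(1-py_j)\bigr)^{n-\ell}$ already damp the annealed integral by $\exp\bigl(-2(n-\ell)\sqrt{pn}\,(1+o(1))\bigr)$ (this is the $\mathcal H(\bar u)$ in the Theorem~\ref{thm3} calculation), and the tilted exponent replaces the constant $2$ by $\tfrac1{1-\epsilon}+(1-\epsilon)=2+\tfrac{\epsilon^2}{1-\epsilon}$. The ``$2$'' part cancels, to within $O\bigl((n-\ell)\log\log n\bigr)$, the leading $2(n-\ell)\log\tfrac{n}{n-\ell}\le 2(n-\ell)\bigl(\sqrt c\log n+O(\log\log n)\bigr)$ coming from $\binom n\ell^2$ (using $\sqrt{pn}=\sqrt c\log n$ and $n-\ell\ge\Delta_n$), and the remaining excess $\tfrac{\epsilon^2}{1-\epsilon}(n-\ell)\sqrt{pn}$ beats the $O((n-\ell)\log\log n)$ remainder. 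So the inequality holds, but via an exact cancellation of the $O((n-\ell)\log n)$ terms, not via a gross ``extra $\log n$'' domination; any route that does not explicitly keep the $\prod(1-px_i)^{n-\ell}$ damping inside the Laplace estimate will not see this. Finally, a minor point: for the $\mathcal Q_n^{(+)}$ side you need $\xi\ge 1$, where replacing the product by a single power of $\tfrac{1-e^{-p(\xi s+n-\ell)}}{p(\xi s+n-\ell)}$ requires $p\xi x_j\le 1$, hence the extra constraint $\max_j x_j/s\le\ell/k$ and the corresponding $\mathbb P\bigl(\max_j L_j\ge\ell/k\bigr)$ correction — your sketch does not mention this step.
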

{\bf Note.\/} In a sharp contrast, for unconstrained matchings we proved  \cite{Pit2} that whp the pairs $(Q(M), R(M))$
of ranks of wives and husbands for stable matchings $M$ are all densely concentrated around a {\it hyperbola\/} $QR=n^3$, connecting $\bigl(n\log n, \tfrac{n^2}{\log n}\bigr)$ and $\bigl(\tfrac{n^2}{\log n}, n\log n\bigr)$, the asymptotic rank values for men-optimal and women-optimal matchings. Ashlagi, Kanoria and Leshno \cite{Ash} had discovered 
a similar asymptotic independence $(Q(M), R(M))$ of a stable matching $M$   
when the numbers of men and women differ even by $1$ only. In a follow-up paper \cite{Pit4}, we proved asymptotic concentration of the scaled 
$(Q(M), R(M))$ around a certain pair $(Q,R)$ dependent on the numbers of men and women.
\begin{proof} {\bf (i)\/} Let $k=\big \lfloor(1-\eps)(pn^3)^{1/2}\rfloor$.  By \eqref{6.5}, \eqref{6.6} and the union bound, we have:
for $\ell_n:=\lfloor n-\ga p^{-1}\log(np)\rfloor,\,\ga >2$, and $\Delta_n=n^{1-\sqrt{c}} \log^{-2} n$,
\begin{multline}\label{16}
\Bbb P\bigl(\mathcal Q_n^{(-)}\le k\bigr)\le \Bbb P\bigl(\mathcal L_n\notin [\ell_n, n-\Delta_n)\bigr)+\sum_{\ell\in [\ell_n, n-\Delta_n)}p^{\ell}(1-p)^{(n-\ell)^2}\binom{n}{\ell}^2 \ell !\\
\times \int\limits_{\bold x,\bold y\in [0,1]^{\ell}}\sum_{m=\ell}^k\bigl[\xi^{m-\ell}\bigr]\prod_{1\le i\neq j\le \ell}\bigl[1-px_i(1-\xi+\xi y_j)\bigr]\,d\bold x d\bold y\\
\times\biggl(\prod_{i\in [\ell]}(1-px_i)\cdot\prod_{j\in [\ell]}(1-py_j)\biggr)^{n-\ell}\,d\bold x d\bold y.
\end{multline}
By Theorem \ref{thm2} and Theorem \ref{thm3.5}, the probability term on the RHS is super-polynomially small. Furthermore, the bottom product is at most 
\begin{equation}\label{16.01}
\exp(-p(n-\ell)s) \exp\biggl(-p(n-\ell)\sum_{j\in [\ell]}y_j\biggr),\quad s:=\sum_{i\le\ell} x_i.
\end{equation}
Since the remaining double product is a polynomial of $\xi$ with non-negative, $(\bold x,\bold y)$-dependent coefficients, we apply
the Chernoff-type bound and obtain
\begin{multline}\label{!-}
\sum_{m=\ell}^k\bigl[\xi^{m-\ell}\bigr]\!\!\prod_{1\le i\neq j\le \ell}\!\!\bigl[1-px_i(1-\xi+\xi y_j)\bigr]
\!\le\! \inf_{\xi\le 1}\xi^{\ell-k}\!\!\!\prod_{1\le i\neq j\le \ell}\!\!\!\bigl[1-px_i(1-\xi+\xi y_j)\bigr]\\
\le\inf_{\xi\le 1}\exp\biggl((\ell-k)\log\xi-\sum_{j\le\ell} ps_j(1-\xi+\xi y_j)\biggr),\quad \Bigl(s_j:=\sum_{i\neq j}x_i\Bigr),\\
=\inf_{\xi\le 1}\exp\biggl((\ell-k)\log\xi-(\ell-1)(1-\xi)ps-\xi\sum_{j\le\ell} ps_j y_j\biggr).
\end{multline}
Using \eqref{16.01} and integrating over $\bold y\in [0,1]^{\ell}$, we bound the resulting integral in \eqref{16} by
\begin{equation}\label{!}
\int\limits_{\bold x\in [0,1]^{\ell}}\!\!\inf_{\xi\le 1}\exp\Bigl[(\ell-k)\log\xi-\bigl(n-1-\xi(\ell-1)\bigr)ps\Bigr]
\prod_{j\le\ell}\tfrac{1-e^{-p(\xi s_j+n-\ell)}}{
p(\xi s_j+n-\ell)}\,d\bold x
\end{equation}
Since $\xi\le 1$, by \eqref{9.1} the last product is bounded by
\[
\bigl(\tfrac{1-e^{-p(\xi s+n-\ell)}}{p(\xi s+n-\ell)}\bigr)^{\ell}\cdot\exp\Bigl(A\sum_{j\le\ell}\tfrac{p\xi x_j}{p(\xi s_j+n-\ell)+1}\Bigr)\\
\le e^A\cdot \bigl(\tfrac{1-e^{-p(\xi s+n-\ell)}}{p(\xi s+n-\ell)}\bigr)^{\ell}.
\]
Thus, replacing $d\bold x$ with $\tfrac{s^{\ell-1}\, ds}{(\ell-1)!}$, we see that the integral in \eqref{16} is at most of order
\begin{multline*}
\tfrac{1}{(\ell-1)!}\int\limits_{s\le \ell}\inf_{\xi\le 1}\exp\Bigl[(\ell-k)\log\xi-\bigl(n-1-\xi(\ell-1)\bigr)ps\Bigr]
\Bigl(\tfrac{1-e^{-p(\xi s+n-\ell)}}{
p(\xi s+n-\ell)}\Bigr)^{\ell}\,s^{\ell-1}ds\\
=\tfrac{1}{p^{\ell}(\ell-1)!}\int\limits_0^{p\ell}\,\inf_{\xi\le 1}\exp\bigl[(\ell-k)\log\xi-\bigl(n-1-\xi(\ell-1)\bigr)u\bigr]\,
\biggl(\tfrac{1-e^{-\xi u-p(n-\ell)}}{\xi u+p(n-\ell)}\biggr)^{\ell} u^{\ell-1}\,du.\\
\end{multline*}
To get the most out of this upper bound, we need to determine $\xi=\xi(u)\le 1$ that nearly minimizes 
\[
H(\xi,u):=(\ell-k)\log\xi+\xi(\ell-1)u +\ell \log\tfrac{1-e^{-\xi u-p(n-\ell)}}{\xi u+p(n-\ell)},  
\]
for $\xi\in  (0,1]$. As a function of $\xi\in (0,\infty)$,  $H(\xi,u)$ is convex and attains its minimum at a point relatively close to $\xi=\tfrac{k/\ell}{u}$.
So, we {\it define\/} $\xi(u)=\tfrac{k/\ell}{u}$ for
$u\ge k/\ell$, and $\xi(u)\equiv1$ for $u<k/\ell$.
For $\xi=\xi(u)$, we bound the above integral by $\int_1+\int_2$, where
\[
\int_1=\int\limits_0^{k/\ell} e^{-(n-\ell)u}\,\biggl(\tfrac{1-e^{-u-p(n-\ell)}}{u+p(n-\ell)}\biggr)^{\ell-1}u^{\ell-1}\,du,
\]
and
\begin{multline}\label{16.02}
\int_2=\biggl(\tfrac{1-e^{-k/\ell-p(n-\ell)}}{k/\ell+p(n-\ell)}\biggr)^{\ell}\bigl(\tfrac{k}{\ell}\bigr)^{\ell-k}
\exp\bigl(\tfrac{k(\ell-1)}{\ell}\bigr)
\int\limits_{k/\ell}^{p\ell} u^{k-1} e^{-u(n-1)}\,du\\
\le \bigl(\tfrac{\ell}{k}\bigr)^{\ell}\exp\Bigl(-\tfrac{p(n-\ell)\ell^2}{k}\bigl(1+O(k^{-1}n\log(np))\bigr)\Bigr)
\bigl(\tfrac{k}{\ell}\bigr)^{\ell-k}
\exp\bigl(\tfrac{k(\ell-1)}{\ell}\bigr)\cdot\tfrac{(k-1)!}{(n-1)^k}\\
\le \text{const}\,k^{-1/2}\exp\Bigl(-(n-\ell)\tfrac{pn^2}{k}\bigl(1+O((np)^{-1/2}\log(np))\bigr)\Bigr)
\bigl(\tfrac{\ell}{n}\bigr)^k\\
\le\text{const }p^{-1/2}n^{-3/4}\exp\Bigl(-(n-\ell)\bigl[\tfrac{pn^2}{k}(1+o(1))+\tfrac{k}{n}\bigr]\Bigr).
\end{multline}
(For the second to last step we used Stirling formula for $k!$ and $\tfrac{k}{\ell}-\tfrac{k}{n}=O\bigl(\tfrac{\log (np)}{\sqrt{np}}\bigr)$.) So, the contribution of $\bold x$ with $u=ps\ge k/\ell$ to $\ell$-th term in \eqref{16} is of order 
\begin{multline*}
p^{\ell}\cdot\tfrac{1}{p^{\ell}(\ell-1)!}\binom{n}{\ell}^2 \ell ! \,p^{-1/2}n^{-3/4}\exp\Bigl(-(n-\ell)\bigl[\tfrac{pn^2}{k}(1+o(1))+\tfrac{k}{n}\bigr]\Bigr). \\
\le \tfrac{n^{1/4}}{p^{1/2}} \cdot \bigl(\tfrac{en}{n-\ell}\bigr)^{2(n-\ell)} \exp\Bigl(-(n-\ell)\bigl[\tfrac{pn^2}{k}(1+o(1))+\tfrac{k}{n}\bigr]\Bigr).\\
\end{multline*}
(We used $\binom{b}{a}\le \bigl(\tfrac{eb}{a}\bigr)^a$.) Since $n-\ell\ge \Delta_n$, and $k=\lfloor (1-\eps)(pn^3)^{1/2}\rfloor$, the last bound is of order
\begin{multline*}
\exp\Bigl(-(n-\ell)\sqrt{pn}\,\bigl(\tfrac{1}{1-\eps}+(1-\eps)+o(1)\bigr)+2\sqrt{c} (n-\ell)(\log n+O(\log\log n))\Bigr)\\
=\exp\Bigl(-(n-\ell)\bigl(\tfrac{1}{1-\eps} +(1-\eps) -2 +o(1)\bigr)\log n\Bigr)\\
=\exp\Bigl(-(n-\ell)\bigl(\tfrac{\eps^2}{1-\eps}+o(1)\bigr)\log n\Bigr).
\end{multline*}
The sum of these bottom bounds over $n-\ell>\Delta_n$ is of order $\exp(-\Theta(\Delta_n))$, i. e. which is sub-exponentially small.

Consider $\int_1$. We bound  the integrand by 
\begin{equation}\label{16.03}
\psi(u):=e^{-(n-\ell)u}\biggl(\tfrac{u}{u+p(n-\ell)}\biggr)^{\ell-1};
\end{equation}
$\psi(u)$ is log-concave so that $\psi(u)\le \psi(k/\ell)\exp\bigl((u-k/\ell)(\log\psi)'(k/\ell)\bigr)$. Here
\begin{multline}\label{16.035}
\psi(k/\ell)\le e^{-(n-\ell)k/\ell} \bigl(1-\tfrac{p(n-\ell)}{k/\ell+p(n-\ell)}\bigr)^{\ell-1}\\
=\exp\Bigl[-(n-\ell)\bigl(\tfrac{k}{\ell}+\tfrac{\ell^2 p}{k}(1+O(1/\ell+p(n-\ell)\ell/k))\bigr)\Bigr]\\
=\exp\Bigl[-(n-\ell)\bigl(\tfrac{k}{\ell}+\tfrac{\ell^2 p}{k}\bigr)\bigl(1+O(\tfrac{\log(np)}{\sqrt{np}})\bigr)\Bigr].
\end{multline}
Furthermore, 
\begin{multline}\label{16.04}
(\log \psi)'(k/\ell)=-(n-\ell)+(\ell-1)\tfrac{p(n-\ell)}{k/\ell (k/\ell+p(n-\ell))}\\
=(n-\ell)\Bigl[\tfrac{p\ell^3}{k^2}-1 +O\bigl(\tfrac{\ell^2 p}{k^2}+\tfrac{\ell^4 p^2(n-\ell)}{k^3}\bigr)\Bigr]\\
=(n-\ell)\Bigl(\tfrac{p\ell^3}{k^2}-1+O\bigl(\tfrac{\log(np)}{\sqrt{np}}\bigr)\Bigr).
\end{multline}
$\frac{p\ell^3}{k^2}-1$ is {\it positive  and bounded away from $0$}, since $\ell\sim n$ and $k^2\le (1-\eps)^2 pn^3$. It follows that $\int_1$ is of order of the bottom expression in \eqref{16.035}, uniformly for $\ell_n\le \ell<n-\Delta_n-1$. So, the contribution of $\bold x$ with $u=ps\le k/\ell$ to the $\ell$-term in \eqref{16} is of order 
\[
n\bigl(\tfrac{en}{n-\ell}\bigr)^{n-\ell}\exp\Bigl[-(n-\ell)\bigl(\tfrac{k}{\ell}+\tfrac{\ell^2 p}{k}\bigr)\bigl(1+O(\tfrac{\log(np)}{\sqrt{np}})\bigr)\Bigr].
\]
Hence, for $k=\lfloor (1-\eps)(n^3p)^{1/2}\rfloor$ and $\eps\in (0,1)$, the overall contribution of those $\bold x$' to the RHS in \eqref{16} is sub-exponentially small. 
We conclude that the sum over $\ell\in \bigl[\ell_n,n-\Delta_n-1\bigr]$ on the RHS of of \eqref{16} is sub-exponentially small, as well.

{\bf (ii)\/} The proof is close to the part {\bf (i)\/}.  
Let $k=\big \lfloor(1+\eps)(pn^3)^{1/2}\rfloor$. This time we need to upper bound $\Bbb P\bigl(Q_n^{(+)}\ge k)$.
Analogously to \eqref{16}, we have: for $\ell_n:=\lfloor n-cp^{-1}\log(np)\rfloor,\,c>2$,
\begin{multline}\label{17}
\Bbb P\bigl(\mathcal Q_n^{(+)}\ge k\bigr)\le \Bbb P\bigl(\mathcal L_n\notin [\ell_n,n-\Delta_n)\bigr)+
\sum_{\ell\in [\ell_n, n-\Delta_n)}p^{\ell}(1-p)^{(n-\ell)^2}\binom{n}{\ell}^2 \ell !\\
\times \int\limits_{\bold x,\bold y\in [0,1]^{\ell}}\sum_{m=k}^{\ell^2}\bigl[\xi^{m-\ell}\bigr]\prod_{1\le i\neq j\le \ell}\bigl[1-px_i(1-\xi+\xi y_j)\bigr]\,d\bold x d\bold y\\
\times\biggl(\prod_{i\in [\ell]}(1-px_i)\cdot\prod_{j\in [\ell]}(1-py_j)\biggr)^{n-\ell}\,d\bold x d\bold y.
\end{multline}
Like \eqref{!}, the Chernoff-type upper bound for the sum in the integrand is  
\begin{multline}\label{!!}
\sum_{m=k}^{\ell^2}\bigl[\xi^{m-\ell}\bigr]\prod_{1\le i\neq j\le \ell}\bigl[1-px_i(1-\xi+\xi y_j)\bigr]\\
\le\inf_{\xi\ge 1}\exp\biggl((\ell-k)\log\xi-(\ell-1)(1-\xi)ps-\xi\sum_{j\le\ell} ps_j y_j\biggr).
\end{multline}
Combining \eqref{17} and \eqref{!!}, and integrating with respect to $y$, we bound the resulting integral in \eqref{17} by
\begin{equation}\label{!!!}
\int\limits_{\bold x\in [0,1]^{\ell}}\!\!\inf_{\xi\ge 1}\exp\Bigl[(\ell-k)\log\xi-\bigl(n-1-\xi(\ell-1)\bigr)ps\Bigr]
\prod_{j\le\ell}\tfrac{1-e^{-p(\xi s_j+n-\ell)}}{p(\xi s_j+n-\ell)}\,d\bold x,
\end{equation}
cf. \eqref{!}. The last product is of order $\bigl(\tfrac{1-e^{-p(\xi s+n-\ell)}}{p(\xi s+n-\ell)}\bigr)^{\ell}$ if $p\xi x_j\le 1$ for
$j\le\ell$. Motivated by the part {\bf (i)\/}, with this term instead of that product, we would be content to select $\xi=
\tfrac{k/\ell}{ps}$ if $\tfrac{k}{\ell}\ge ps$, and $\xi=1$, otherwise. Now, $p\xi x_j\le 1$ for $\xi=1$; however for $\xi=\tfrac{k/\ell}{ps}
\ge 1$, we need to meet the constraint $\max_{j\le \ell}\tfrac{x_j}{s}\le\tfrac{\ell}{k}$  to satisfy $p\xi x_j\le 1$. So, we define
\[
\xi=\xi(\bold x)=\left\{\begin{aligned}
&\tfrac{k/\ell}{ps},&&\text{if }ps\le\tfrac{k}{\ell},\,\max_{j\le\ell}\tfrac{x_j}{s}\le\tfrac{\ell}{k},\\
&1,&&\text{otherwise.}\end{aligned}\right.
\]
Analogously to the bound for the integral in \eqref{16},  the resulting integral in \eqref{17} is 
$\tfrac{1}{p^{\ell}(\ell-1)!}\bigl(\int_{1}+\int_{2})$, corresponding to $\bold x$ where $\xi(\bold x)>1$ and $\xi(\bold x)=1$, respectively. Clearly
\begin{align*}
\int_1\le \int_{1}^+&=\biggl(\tfrac{1-e^{-k/\ell-p(n-\ell)}}{k/\ell+p(n-\ell)}\biggr)^{\ell}\bigl(\tfrac{k}{\ell}\bigr)^{\ell-k}
\exp\bigl(\tfrac{k(\ell-1)}{\ell}\bigr)
\int\limits_0^{k/\ell} u^{k-1} e^{-u(n-1)}\,du,\\
\int_2\le \int_2^+&=\int\limits_{k/\ell}^{p\ell} e^{-(n-\ell)u}\,\biggl(\tfrac{u}{u+p(n-\ell)}\biggr)^{\ell-1}\,du\\
&+\Bbb P\bigl(\max_j L_j\ge \tfrac{\ell}{k}\bigr)\int\limits_{0}^{k/\ell} e^{-(n-\ell)u}\,\biggl(\tfrac{u}{u+p(n-\ell)}\biggr)^{\ell-1}\,du.
\end{align*}
Comparing $\int_1^+$ to $\int_{2}$ in \eqref{16.02} we see that  $\int_1^+$ is upper-bounded by the bottom RHS
in \eqref{16.02}. This implies that, the overall contribution to the RHS sum with $\ell\le n-\Delta_n$ in \eqref{17} that comes from $\bold x$'s with $\xi(\bold x)>1$ is sub-exponentially small. As for $\int_2^+$, the common integrand is the log-concave function $\psi(u)$ defined by \eqref{16.03}, over $[k/\ell,p\ell]$. By 
\eqref{16.035} and \eqref{16.04}, we have 
\begin{align*}
\psi(k/\ell)&\le\exp\Bigl[-(n-\ell)\bigl(\tfrac{k}{\ell}+\tfrac{\ell^2 p}{k}\bigr)\bigl(1+O(\tfrac{\log(np)}{\sqrt{np}})\bigr)\Bigr],\\
(\log \psi)'(k/\ell)&=(n-\ell)\Bigl(\tfrac{p\ell^3}{k^2}-1+O\bigl(\tfrac{\log(np)}{\sqrt{np}}\bigr)\Bigr),
\end{align*}
and this time the factor $\frac{p\ell^3}{k^2}-1$ is {\it negative and bounded away from $0$}, since $\ell\sim n$ and $k^2\ge (1+\eps)^2 pn^3$. It follows that $\int_2^+$ is of order 
\begin{multline*}
\exp\Bigl[-(n-\ell)\bigl(\tfrac{k}{\ell}+\tfrac{\ell^2 p}{k}\bigr)\bigl(1+O(\tfrac{\log(np)}{\sqrt{np}})\bigr)\Bigr]\\
+\Bbb P\bigl(\max_j L_j\ge \tfrac{\ell}{k}\bigr)\exp\Bigl[(n-\ell)\tfrac{k}{\ell}\bigl(1-\tfrac{p\ell^3}{k^2}+O\bigl(\tfrac{\log(np)}{\sqrt{np}}\bigr)\bigr)\\
-(n-\ell)\bigl(\tfrac{k}{\ell}+\tfrac{p\ell^2}{k}\bigr)\bigl(1+O\bigl(\tfrac{\log(np)}{\sqrt{np}}\bigr)\bigr)\Bigr], 
\end{multline*}
uniformly for $\ell_n\le \ell<n$. The second exponent equals $-2(n-\ell)\tfrac{p\ell^2}{k}\bigl(1+O(\tfrac{\log(np)}{\sqrt{np}})\bigr)$.
Since $\tfrac{p\ell^2}{k}\le \tfrac{k}{\ell}$, we obtain that $\int_2^+$ is of order $\exp\Bigl[-2(n-\ell)\tfrac{p\ell^2}{k}\bigl(1+O(\tfrac{\log(np)}{\sqrt{np}})\bigr)\Bigr]$.

So, the contribution of $\bold x$ with $u=ps\ge k/\ell$ to the $\ell$-term in 
\eqref{17} is of order 
\[
n\bigl(\tfrac{en}{n-\ell}\bigr)^{n-\ell}\exp\Bigl(-(n-\ell)\bigl(\tfrac{k}{\ell}+\tfrac{p\ell^2}{k}\bigr)\Bigr).
\]
Hence, the overall contribution of those $\bold x$' to the RHS in \eqref{17} that comes from $\ell\le n-\Delta_n$ is sub-exponentially small. 

We conclude that so is the total sum over $\ell\in [\ell_n,n-\Delta_n-1)]$ on the RHS of \eqref{17}, implying sub-exponentiality of $\Bbb P\bigl(\mathcal Q_n^{(+)}\ge k\bigr)$.

The proof of Theorem \ref{thm4} is complete.
\end{proof}

Finally,

\begin{theorem}\label{5} Suppose that $p\ge c\tfrac{\log^2n}{n}$, $\bigl(c>\tfrac{9}{4}\bigr)$. Let $\eps\in (0,1)$. Whp 
$\mathcal Q_n^{(-)}\ge (1-\eps) n\log n$. 
\end{theorem}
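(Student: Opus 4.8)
The plan is a first--moment argument against the integral formula \eqref{4}, carried out for complete matchings. Put $k=\lfloor(1-\eps)n\log n\rfloor$. Since $c>9/4$, Theorem~\ref{thm3} gives that whp $\mathcal L_n=n$, so every stable matching is complete; hence
\[
\Bbb P\bigl(\mathcal Q_n^{(-)}<k\bigr)\le\Bbb P(\mathcal L_n<n)+\Bbb E\bigl[\#\{\text{complete stable }M:\,Q(M)<k\}\bigr],
\]
and by homogeneity the expectation equals $n!\sum_{m<k}\Bbb P(\text{diagonal matching stable},\,Q_n=m)$, an integral over $\bold x,\bold y\in[0,1]^n$ via \eqref{4}. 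The first summand tends to $0$ by Theorem~\ref{thm3}, so everything reduces to bounding the second.

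To that end I would use a Chernoff bound in the direction $\xi\le1$: the polynomial $P(\xi)=\prod_{i\ne j}(1-px_i(1-\xi+\xi y_j))$ has nonnegative coefficients, so $\sum_{m<k}[\xi^{m-n}]P(\xi)\le\inf_{\xi\le1}\xi^{n-k}P(\xi)$. Then $1-z\le e^{-z}$ gives $P(\xi)\le\exp\bigl(-(1-\xi)(n-1)ps-\xi\sum_j ps_jy_j\bigr)$ with $s=\sum_ix_i$ and $s_j=s-x_j$; integrating over $\bold y\in[0,1]^n$ yields $\prod_jF(\xi ps_j)$, $F(z)=(1-e^{-z})/z$, and the Lipschitz estimate \eqref{9.1} collapses this to $e^AF(\xi ps)^n$. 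Finally, by \eqref{B} one may replace $d\bold x$ by $\tfrac{s^{n-1}}{(n-1)!}\,ds$ (dropping the spacings constraint) and substitute $u=ps$, arriving at
\[
n!\sum_{m<k}\Bbb P(\text{diag stable},Q_n=m)\le e^An\int_0^{pn}J(u)\,\frac{du}{u},\qquad
J(u):=\inf_{\xi\le1}\xi^{-k}e^{-(1-\xi)(n-1)u}\bigl(1-e^{-\xi u}\bigr)^n.
\]

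To estimate $J$ I would set $w=\xi u$ ($0<w\le u$), so that $\log J(u)=\bigl(k\log u-(n-1)u\bigr)+\min_{0<w\le u}h(w)$ with $h(w)=-k\log w+(n-1)w+n\log(1-e^{-w})$. Inspecting $h'$ (and noting $h''>0$ throughout the relevant range) shows $h$ is minimized at $w^{*}$ solving $k/w^{*}=(n-1)+n/(e^{w^{*}}-1)$; since $e^{w^{*}}=n^{(1+o(1))(1-\eps)}$ makes $n/(e^{w^{*}}-1)=o(n)$, one gets $w^{*}=(1-\eps)\log n+o(1)$, hence $n\log(1-e^{-w^{*}})=-(1+o(1))n^{\eps}$. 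For $u\le w^{*}$ the infimum is at $\xi=1$, so $J(u)=(1-e^{-u})^n$ and $u\mapsto J(u)/u$ is increasing there (because $nu>e^u-1$ for such $u$); thus $\int_0^{w^{*}}J(u)\,\tfrac{du}{u}\le J(w^{*})=(1-e^{-w^{*}})^n=\exp(-(1+o(1))n^{\eps})$. For $u>w^{*}$, $\log J(u)=k\log u-(n-1)u+h(w^{*})$ is concave with maximum at $u^{**}=k/(n-1)>w^{*}$, and writing $k/w^{*}=(n-1)(1+\delta)$, $\delta=n/\bigl((n-1)(e^{w^{*}}-1)\bigr)=n^{\eps-1+o(1)}$, one finds
\[
\log J(u^{**})=k\Bigl(\log(1+\delta)-\tfrac{\delta}{1+\delta}\Bigr)+n\log(1-e^{-w^{*}})=O(k\delta^2)-(1+o(1))n^{\eps}=-(1+o(1))n^{\eps},
\]
because $k\delta^2=n^{2\eps-1+o(1)}=o(n^{\eps})$ when $\eps<1$. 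Hence $\int_{w^{*}}^{pn}J(u)\,\tfrac{du}{u}\le J(u^{**})\log(pn)\le(\log n)\exp(-(1+o(1))n^{\eps})$, so $e^An\int_0^{pn}J(u)\,\tfrac{du}{u}\le e^An\log n\exp(-(1+o(1))n^{\eps})\to0$; together with $\Bbb P(\mathcal L_n<n)\to0$ this proves the theorem.

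I expect the main obstacle to be the evaluation of $\log J(u^{**})$: the leading terms $k\log(1+\delta)$ and $(n-1)w^{*}-k$ cancel to first order in $\delta$, so one has to expand to second order and confirm that the surviving $O(k\delta^2)$ is of strictly smaller order than the decisive $n^{\eps}$ contributed by $n\log(1-e^{-w^{*}})$ --- which is exactly where $\eps<1$ enters. A secondary point worth care is that the crude bound $J(u)\le u^n$ on the small-$u$ range is too lossy (after multiplying by $n$ it leaves an $O(1)$ term); using the monotonicity of $J(u)/u$ on $(0,w^{*}]$ avoids this.
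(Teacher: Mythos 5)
Your proof is correct and follows essentially the same route as the paper's: reduce via Theorem~\ref{thm3} to bounding the first moment of complete stable matchings with $Q_n<k$, apply the Chernoff direction $\xi\le1$ to \eqref{4}, collapse the $\bold y$-integral to $F(\xi ps)^n$ via $1-z\le e^{-z}$ and the Lipschitz bound \eqref{9.1}, replace $d\bold x$ by $\tfrac{s^{n-1}}{(n-1)!}ds$ using \eqref{B}, substitute $u=ps$, and Laplace-estimate the resulting one-dimensional integral by splitting at the point where the optimal $\xi$ hits the boundary $\xi=1$. The paper picks the proxy $\xi(u)=(k/n)/u$ (equivalently $w=\xi u\equiv k/n$), whereas you pass to $w=\xi u$ and use the \emph{exact} minimizer $w^*$ of $h(w)=-k\log w+(n-1)w+n\log(1-e^{-w})$; this forces you to confront the first-order cancellation between $k\log(1+\delta)$ and $(n-1)(u^{**}-w^*)$ and verify that the surviving $O(k\delta^2)=n^{2\eps-1+o(1)}\log n$ term is dominated by $n^\eps$ --- which is a cleaner and more transparent accounting than the paper's direct $e^k(n/k)^k k!/(n-1)^k$ manipulation, and it makes explicit where $\eps<1$ is used. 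Your monotonicity bound $\int_0^{w^*}J(u)\,du/u\le J(w^*)$ on the small-$u$ range is a mild simplification of the paper's log-concavity bound and suffices. Both yield the same $\exp(-\Theta(n^\eps))$ rate.
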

\begin{proof} Using \eqref{4}, analogously to \eqref{16}, we have: for $k:=\lfloor (1-\eps)n\log n\rfloor$,
\begin{multline}\label{18} 
\Bbb P\bigl(\mathcal Q_n^{(-)}\le k\bigr)\le \Bbb P\bigl(\mathcal L_n<n)\\
+p^{n}n! 
\int\limits_{\bold x,\bold y\in [0,1]^{n}}\sum_{m=n}^k\bigl[\xi^{k-n}\bigr]\prod_{1\le i\neq j\le n}\bigl[1-px_i(1-\xi+\xi y_j)\bigr]\,d\bold x d\bold y,\\
\end{multline}
Here $\lim_{n\to\infty} \Bbb P\bigl(\mathcal L_n<n)=0$, and following closely the steps of the proof of Theorem \ref{thm4}. 
we obtain that $p^n n!\times$ the double integral is at most of order
\begin{align*}
&\qquad\qquad\qquad n\int_0^{pn}\exp\bigl[\inf_{\xi\le 1} H(\xi,u)\bigr]\, du,\\
&H(\xi,u):=-k\log\xi-(n-1)(1-\xi)u+n\log(1-e^{-\xi u})-\log u.
\end{align*}
(Similarly to those steps, it is important that we consider the case $\xi\le 1$.)
Again analogously to that proof, we are content to select $\xi=\tfrac{k/n}{u}$ for $u>\tfrac{k}{n}$, and $\xi\equiv 1$ for $u\le \tfrac{k}{n}$. Then, due to log-concavity of the resulting integrand, and $k:=\lfloor (1-\eps)n\log n\rfloor$, the contribution of $u\le k/n$ is 
\begin{multline*}
n\int_0^{k/n}\exp\bigl[H(\xi,u)\bigr]\,du=n\int_0^{k/n}\tfrac{(1-e^{-u})^n}{u}\,du\\
\le n\tfrac{(1-e^{-k/n})^n}{k/n}\cdot \bigl(\tfrac{n}{e^{k/n}-1}-\tfrac{n}{k}\bigr)^{-1}=\exp\bigl(-\Theta(n^{\eps})\bigr).
\end{multline*}
Furthermore,
\begin{multline*}
n\int_{k/n}^{pn}\exp\bigl[H(\xi,u)\bigr]\,du\le  n\bigl(\tfrac{n}{k}\bigr)^k\cdot e^k \bigl(1-e^{-k/n}\bigr)^n\int_0^{\infty}\eta^k e^{-(n-1)\eta}\,d \eta\\
\le  n\bigl(\tfrac{n}{k}\bigr)^k e^k\exp\bigl(-n e^{-k/n}\bigr)\tfrac{k!}{(n-1)^k}=\exp\bigl(-\Theta(n^{\eps})\bigr).
\end{multline*}
{\bf (ii)\/}  For $k:=\big\lceil (1+\eps)\tfrac{n^2}{\log n}\big\rceil$, analogously to the part {\bf (ii)\/} of Theorem \ref{thm4},
\begin{multline}\label{18} 
\Bbb P\bigl(\mathcal Q_n^{(+)}\ge k\bigr)\le \Bbb P\bigl(\mathcal L_n<n)\\
+p^{n}n! 
\int\limits_{\bold x,\bold y\in [0,1]^{n}} \xi(\bold x)^{n-k}\prod_{1\le i\neq j\le n}\bigl[1-px_i(1-\xi+\xi y_j)\bigr]\,d\bold x d\bold y,\\
\end{multline}
where $\xi=\xi(\bold x):=\tfrac{k/n}{ps}>1$ since $pn<\tfrac{k}{n}$.
$\max_{j\le n}\tfrac{x_j}{s}\le\tfrac{n}{k}$, and $\xi(\bold x)\equiv 1$ otherwise. Since $pn\le \tfrac{n}{\log n}\le \tfrac{k}{n}$
This time
\begin{multline*}
n\int_0^{k/n}\exp\bigl[H(\xi,u)\bigr]\,du\\
\le  n\Bbb P\Bigl(\max_{i\in [n]}L_i\le \tfrac{n}{k}\Bigr)\cdot\bigl(\tfrac{n}{k}\bigr)^k e^k \bigl(1-e^{-k/n}\bigr)^n\int_0^{\infty}\eta^k e^{-(n-1)\eta}\,d \eta\\
\le   n\Bbb P\Bigl(\max_{i\in [n]}L_i\le \tfrac{n}{k}\Bigr)\cdot  \bigl(\tfrac{n}{k}\bigr)^k e^k \tfrac{k!}{(n-1)^k}
\le n^3\Bbb P\Bigl(\max_{i\in [n]}L_i\le \tfrac{n}{k}\Bigr)\\
\le n^3 \Bbb P\Bigl(\max_{i\in [n]} L_i\le \tfrac{\log n}{(1+\eps) n}\Bigr)\le n^3 \exp\Bigl(-n^{\tfrac{\eps}{1+\eps}}\Bigr)\to 0.
\end{multline*}
\end{proof}

{\bf Notes.\/}  (1) So, for $p\ge c\tfrac{\log^2 n}{n}$, the lower bound for $\mathcal Q_n^{(-)}$ does not depend on $p$. The bound qualitatively fits the asymptotic behavior of $\mathcal Q_n^{(-)}$ for two book-end cases, $p=c\tfrac{\log^2n}{n}$, $(c<1)$, and $p=1$. For the former, $\mathcal Q_n^{(-)}\sim (pn^3)^{1/2}=c^{1/2}n\log n$ (see the preceding Theorem),
and for the latter, $\mathcal Q_n^{(-)}\sim n\log n$, see \cite{Pit1}. Is the lower bound sharp? How large is $\mathcal Q_n^{(+)}$? It was proved in \cite{Pit1} that for $p=1$, $Q_n^{(+)}\sim\tfrac{n^2}{\log n}$, which far exceeds $Q_n^{(+)}$
($\sim c^{1/2}n\log n$) for $p=c\tfrac{\log^2n}{n}$, $(c<1)$.

(2) We conjecture existence of $p^*(n)\in \bigl(\tfrac{9}{4}\tfrac{\log^2 n}{n}, 1\bigr)$, a critical threshold for appearance of a hyperbolic curve, as an asymptotic approximation of $\{(Q(M), R(M))\}_{M}$.

{\bf Acknowledgment.\/}  I am grateful to Stoyan Dimitrov for a stimulating discussion of random stable matchings under
constrained preferences.

\begin{thebibliography}{99}
\bibitem{Ash}
I. Ashlagi, Y. Kanoria Y, and J. D. Leshno \textit{Unbalanced random matching markets: The stark effect of competition},  J. Polit Econom., \textbf{125} (2017) 69–98.
\bibitem{Dim} 
S. Dimitrov, \textit{Personal communication}, (2023).
\bibitem{Erd}
P. Erd\H os and A. A. R\'enyi, \textit{On random matrices}, Publications of the Mathematical Institute of the Hungarian Academy of Sciences, \textbf{8} (1964) 455--461.
\bibitem{Fel}
W. Feller, \textit{An Introduction to Probability Theory and Its Applications}, $2$nd ed., John Wiley, (1971).
\bibitem{Gal}
D. Gale and L. S. Shapley, \textit{College admissions and the stability of marriage}, Amer. Math. Monthly, \textbf{69} (1962) 9--15.
\bibitem{Gus}
D. Gusfield and R. W. Irving, \textit{The stable marriage problem, structure and algorithms}, The MIT Press,
Cambridge (1989).
\bibitem{Knu}
D. E. Knuth, \textit{Mariages Stables et leurs relations avec d'autres probl\'emes combinanatories}, Les Presses de 
l'Universit\'e de Montr\'eal, Montr\'eal (1976).
\bibitem{Len}
C. Lennon and B. Pittel, \textit{Combinatorics, Probability and Computing}, \textbf{18} (2009) 371--421.
\bibitem{Mac}
D. G. McVitie and L. B. Wilson, \textit{The stable marriage problem}, Comm. ACM, \textbf{14} (1971) 486--490.
\bibitem{Pit1}
B. Pittel, \textit{The average number of stable matchings}, SIAM J. Discr. Math., \textbf{2} (1989) 530--549.
\bibitem{Pit2}
B. Pittel, \textit{On likely solutions of a stable marriage problem}, The Annals Appl. Probab., \textbf{2} (1992) 358--401.
\bibitem{Pit3}
B. Pittel, \textit{On random stable partitions}, Internat J. of Game Theory, \textbf{48}  (2019) 433--480.
\bibitem{Pit4}
B. Pittel, \textit{On likely solutions of the stable matching problem with unequal numbers of men and women}, Math. Oper.
Research, \textbf{44} (2019) 122--146.
\bibitem{Wil}
L. B. Wilson, \textit{An analysis of the stable marriage assignment problem}, BIT, \textbf{12} (1972) 569--575.
\end{thebibliography}
\end{document}